\newcommand{\BA}{{\bm A}}
\newcommand{\BI}{{\bm I}}
\newcommand{\BJ}{{\bm J}}
\newcommand{\BR}{{\bm R}}
\newcommand{\BT}{{\bm T}}
\newcommand{\BW}{{\bm W}}
\newcommand{\Ba}{{\bm a}}
\newcommand{\Bb}{{\bm b}}
\newcommand{\Be}{{\bm e}}
\newcommand{\Bu}{{\bm u}}
\newcommand{\Bv}{{\bm v}}
\newcommand{\Bx}{{\bm x}}
\newcommand{\By}{{\bm y}}
\newcommand{\Bphi}{{\bm \phi}}
\newcommand{\Balpha}{{\bm \alpha}}
\newcommand{\td}{{\rm d}}
\newcommand{\tvec}{\text{vec}}
\newtheorem{remark}{Remark}[section]
\title{Deep Neural Networks for Solving Large Linear Systems Arising From High-dimensional Problems}
\author{Yiqi Gu\thanks{Department of Mathematics, The University of Hong Kong, Pokfulam, Hong Kong ({\tt yiqigu@hku.hk, mng@maths.hku.hk}). This work is supported by
Hong Kong Research Grant Council GRF 12300218, 12300519,
17201020, 17300021, C1013-21GF, C7004-21GF and Joint NSFC-RGC N-HKU76921.}
\and Michael K. Ng\footnotemark[1]}
\begin{document}

\maketitle

\begin{abstract}
This paper studies deep neural networks for solving extremely large linear systems arising from high-dimensional problems. Because of the curse of dimensionality, it is expensive to store both the solution and right-hand side vector in such extremely large linear systems. Our idea is to employ a neural network to characterize the solution with much fewer parameters than the size of the solution under a matrix-free setting. We present an error analysis of the proposed method, indicating that the solution error is bounded by the condition number of the matrix and the neural network approximation error. Several numerical examples from partial differential equations, queueing problems, and probabilistic Boolean networks are presented to demonstrate that the solutions of linear systems can be learned quite accurately.
\end{abstract}

\begin{keywords}
very large-scale linear systems; neural networks; partial differential equations; Riesz fractional diffusion; overflow queuing model; probabilistic Boolean networks;
\end{keywords}
\begin{AMS}
65F10; 65F50; 65N22; 68T07; 60K25
\end{AMS}

\pagestyle{myheadings}
\thispagestyle{plain}
\markboth{}{}

\section{Introduction}\label{Sec_introduction}
Linear equations appear widely in applied problems such as partial differential equations (PDEs) and numerical optimization. In physical problems, one usually needs to compute some physical quantity, such as temperature distribution and fluid velocity. Let us suppose the problem is addressed in a $d$-dimensional domain $\Omega\subset\mathbb{R}^d$, in which a grid $\Gamma$ is set up. A grid function $u$ on $\Gamma$ is thereafter introduced to approximate the physical quantity. Then after using a spatial discretization on the problem, $u$ is computed through the following system of linear equations,
\begin{equation}\label{01}
\BA\Bu=\Bb,
\end{equation}
where $\BA\in\mathbb{R}^{\widetilde{M}\times \widetilde{N}}$ ($\widetilde{M},\widetilde{N}\in\mathbb{N}^+$) is a non-singular matrix; $\Bb\in\mathbb{R}^{\widetilde{M}}$ is a given vector; $\Bu=\tvec(u)\in\mathbb{R}^{\widetilde{N}}$ is the vector representation of $u$.

\subsection{Existing methods and difficulties}
Traditional linear solvers, including direct and iterative methods, have been extensively studied for a long time. Let us consider the high-dimensional problem that $\Omega$ is a $d$-dimensional box and \eqref{01} is assembled with tensor product structure on a $N\times N\times\cdots\times N$ ($d$ times) grid, where $N\in\mathbb{N}^+$ is the degree of freedom in every dimension. A series of methods have been developed for the linear system with tensor product structure, such as Krylov subspace method \cite{Kressner2010}, projection method \cite{Ballani2013}, and Cayley transformation \cite{Fan2017}. However, for such linear systems, the numbers of equations and unknowns satisfy $\widetilde{M}=\widetilde{N}=N^d$ that are extremely large even if $d$ is moderately large. In spite of some development concerning large-scale problems \cite{Bai2015,Zhang2019,Niu2020}, the practical performance of many existing methods is still limited by the dimension. For high-dimensional problems with larger $d$, the system size $N^d$ might exceed the machine storage so that even the intermediate solution
cannot be stored entirely in memory.

\subsection{Motivations and contributions}
In recent years, the theory and applications of neural networks (NNs) have been widely studied in a variety of areas, including computer science and applied mathematics. Generally speaking, NNs are a type of function with a nonlinear parametric structure. It has been found in a series of literature that NNs can approximate common functions effectively. In the pioneering work \cite{Cybenko1989,Hornik1989,Barron1993}, the universal approximation theory of two-layer shallow NNs is discussed. In recent research, quantitative information about the NN approximation error was presented for various types of functions, e.g., smooth functions \cite{Lu2017,Liang2017,Yarotsky2017,E2018,Montanelli2019,Suzuki2018,E2019_2,E2022}, piecewise smooth functions \cite{Petersen2018}, band-limited functions \cite{Montanelli2019_2}, continuous functions \cite{Yarotsky2018,Shen2019,Shen2020}.

One remarkable property of NNs is the capability to approximate high-dimensional functions. Many traditional approximation structures, such as finite elements and polynomials, suffer from the curse of dimensionality. Specifically, when approximating a function of $d$-dimensional variable, their error bounds will be $O(J^{-\alpha/d})$, where $J$ is the number of free parameters and $\alpha$ characterizes the regularity of the function. However, NNs can avoid such issues for some special function spaces. A typical example is the Barron space, for which the NN approximation error is either independent of $d$ or increasing with $d$ very slowly \cite{Barron1992,Barron1993,Klusowski2018,E2020,E2020_2,Siegel2020_1,Siegel2020_2,Caragea2020,E2022}. So far, NNs have been applied successfully in solving high-dimensional PDEs and inverse problems \cite{Han2018,Raissi2019,Tipireddy2019,Zang2020,Keller2021}. For high-dimensional PDEs, one successful application is the physics-informed neural networks (PINNs) \cite{Raissi2019}. More precisely, the PDE solution is approximated by a general NN, which is then trained through the minimization of the PDE residual (e.g., the PINN) in the least squares sense.

In this work, we follow the idea of NN approximation for high-dimensional functions adopted in previous applications (e.g., PINNs), and propose a novel NN-based method to solve linear systems \eqref{01}.
Specifically, we realize that in many real-world problems, the physical quantities are continuously distributed. Our method comes from the fact that if the physical quantity $v$ is smooth enough in $\Omega$, NNs with only a small number of parameters can approximate $v$ with the desired accuracy. Meanwhile, if the true solution $\Bu$ of \eqref{01} is a good approximation to $v$, it can also be approximated accurately by such NNs. This allows us to take an NN $\phi$ to characterize $\Bu$ with much fewer parameters than $\widetilde{N}=N^d$. In this method, the unknown numeric vector $\Bu$ in \eqref{01} is replaced with a vector function whose variables are NN parameters. Then the task of solving the large linear system is transformed into solving a new small nonlinear system. In the proposal, the new system is solved by a least squares method under a deep learning framework. This approach is able to solve linear systems of very large sizes that may be difficult for existing methods. Error analysis is also conducted for this method, provided that the true physical quantity is in the Barron space.

Several typical problems are solved using the proposed method in numerical experiments. The first problem is the tensor-structured linear system derived from Poisson's equation using the centered finite difference scheme, in which the system with $(N,d)=(10^4,6)$ is solved effectively. We mention that our method applied to this example is equivalent to PINNs except for a few differences in the setting. Next, we consider linear systems derived from Riesz fractional diffusion \cite{Huang2021}, a fractional differential equation. Beyond PDEs, we apply our method to the discrete problem: overflow queuing problem \cite{Chan1987,Chan1988,Chan1996}, where we succeed in solving such linear systems derived from 10 dimensions (i.e., 10 queues), while earlier work can at most address three dimensions numerically. Finally, as the last example, we solve a $2^d\times2^d$ sparse system from probabilistic Boolean networks. Our method successfully solves a problem with 100 dimensions and $O(10^{30})$ nonzero entries, while previous work \cite{Li2012} merely addresses 30 dimensions and $O(10^4)$ nonzero entries.

\subsection{Organization of paper}
This paper is organized as follows. In Section 2, we review the fully-connected neural networks, the conceptual NN-based method, and the practical algorithms with gradient descent. In Section 3, we estimate the error of the approximate solution under the Barron space hypothesis. Several examples of physical problems are demonstrated in Section 4 to test the performance of the proposed method. Conclusions and discussions about further research work are provided in Section 5.

\section{NN-based Method}
In this section, we introduce the concepts of NNs and explain how to use NNs to approximate the solution of linear systems. In this paper, we use bold fonts to denote matrices and vectors.

\subsection{Fully-connected neural network}\label{Sec_FNN}
Among the many types of NNs, the fully connected neural network (FNN) is the most basic and commonly used in applied mathematics. Mathematically speaking, given $L\in\mathbb{N}^+$ and $M_\ell\in\mathbb{N}^+$ for $\ell=1,\dots,L-1$, where $\mathbb{N}^+$ denotes the set of positive integers, we define the simple nonlinear function $h_{\ell}: \mathbb{R}^{M_{\ell-1}}\rightarrow\mathbb{R}^{M_\ell}$ given by
\begin{equation}
h_{\ell}(\Bx_{\ell}):=\sigma\left(\BW_\ell \Bx_{\ell} + \Bb_\ell \right)
\end{equation}
where $\BW_\ell \in \mathbb{R}^{M_\ell\times M_{\ell-1}}$; $\Bb_\ell \in \mathbb{R}^{M_\ell}$; $\sigma(\By)$ is a given function which is applied entry-wise to a vector $\By$ to obtain another vector of the same size, named activation function. Common activation functions include rectified linear unit (ReLU) $\max\{0,y\}$ and the sigmoidal function $(1+e^{-y})^{-1}$.

Set $M_0=d$, the dimension of the input variable, then an FNN $\phi: \mathbb{R}^{d}\rightarrow\mathbb{R}$ is formulated as the composition of these $L-1$ simple nonlinear functions, namely
\begin{equation}
\phi(\Bx;\theta)=\Ba^\top h_{L-1} \circ h_{L-2} \circ \dots \circ h_{1}(\Bx)\quad \text{for}~\Bx\in\mathbb{R}^d,
\end{equation}
where $\Ba\in \mathbb{R}^{M_{L-1}}$ and $\theta:=\{\Ba,\,\BW_\ell,\,\Bb_\ell:1\leq \ell\leq L-1\}$ denotes the set of all free parameters. Here $M_\ell$ is named as the width of the $\ell$-th layer, and $L$ is named as the depth. The widths and depth characterize the architecture of an FNN. So in fixing $\sigma$, $L$ and $\{M_\ell\}_{\ell=1}^{L-1}$, the FNN architecture is completely determined, but the parameters in $\theta$ are still free. In the following passage, for simplicity, we only consider the architecture with fixed width $M_\ell=M$ for all $\ell=1,\dots,L-1$. We use $\mathcal{F}_{L,M,\sigma}$ to denote the set of all FNNs with depth $L$, width $M$, and activation function $\sigma$.

We can calculate the number of scalar parameters in $\theta$. It is clear that the input layer with $\ell=1$ has $(d+1)M$ scalars, the output layer $\Ba$ has $M$ scalars, and other hidden layers with $\ell=2,\dots,L-1$ totally have $(L-2)M(M+1)$ scalars. So $|\theta|=(d+1)M+M+(L-2)M(M+1)=(L-2)M^2+(d+L)M$.

\subsection{Problem description}
Let us describe the linear system arising from $d$-dimensional problems with a physical or conceptual domain $\Omega$. As a typical example, we assume that $\Omega$ is the $d$-dimensional unit box $[0,1]^d$, and a Cartesian grid is set up on $\Omega$. The following discussion can be easily generalized for other domain geometries and grid settings.
Suppose one aims to determine an unknown real function $v(\Bx)$ for $\Bx\in\Omega$ from a high-dimensional physical problem. One common way is setting a grid on $\Omega$ and determining $v$ on every grid point. Specifically, we let $N\in\mathbb{N}^+$
(the set of positive integers), and let $0\leq x_1<\dots<x_N\leq 1$ be a 1-D grid in $[0,1]$. We use the vector of the form $\Balpha=(\alpha_1,\dots,\alpha_d)$, where each component $\alpha_i$ is an integer in $[1,N]$, to denote a multi-index. Also, we define $\Lambda=\left\{\Balpha:1\leq\alpha_i\leq N ~\text{for}~ i=1,\dots,d\right\}$ as the set of all multi-indices. Then for any $\Balpha$, the column vector $\Bx_\Balpha:=[x_{\alpha_1}~\dots~x_{\alpha_d}]^\top$ represents a Cartesian grid point in $\Omega$, and
\begin{equation}
\Gamma:=\left\{\Bx_\Balpha:\Balpha\in\Lambda\right\}
\end{equation}
is the set of all Cartesian grid points. It is clear that $|\Gamma|=N^d$.

We use a real number $u_\Balpha$ to approximate $v(\Bx_\Balpha)$. By using
computational methods (e.g., finite difference method in solving differential equations), we can derive a linear system concerning $\{u_\Balpha:\Balpha\in\Lambda\}$ from the original physical problem, namely,
\begin{equation}\label{20}
\sum_{\Balpha\in\Lambda} a_{m,\Balpha} u_\Balpha=b_m,\quad\text{for}~m=1,\dots,\widetilde{M},
\end{equation}
where $a_{m,\Balpha}\in\mathbb{R}$ is the coefficient of $u_\Balpha$ in the $m$-th equation (cf. \eqref{01}) and $\widetilde{M}$ is the number of equations. Many linear systems arising from practical problems are matrix-free such that one can directly get the value of $a_{m,\Balpha}$ from $\Balpha$ and $m$ instead of loading it from the storage. In the following discussion, we only consider the linear systems that are matrix-free. Moreover, we assume that the right hand side $\{b_m\}$ can be obtained instantly for specified $m$ when the linear system is being solved, and we do not need to save the entire right hand side in the storage. For example, in solving differential equations, $\{b_m\}$ are the values of a given function at grid points, which can be computed in real-time for $m$ in a small subset of $\{1,\dots,\widetilde{M}\}$. This assumption allows us to implement the memory-saving algorithm proposed in Section \ref{sec:SGD}.

\subsection{A conceptual method}
In most cases, the number of grid points $N$ in every dimension is set large for high resolution. Therefore, one difficulty of solving \eqref{20} is its possibly large size $N^d$ when $d$ is moderately large. At the very worst, $N^d$ exceeds the memory limit, and even a vector in $\mathbb{R}^{N^d}$ cannot be saved entirely in memory. For instance, if one sets $N=10$ grid points in every dimension on a machine with 32G memory, the bytes of a $N^d$ double-precision vector will exceed the memory limit when $d\geq10$. This situation forbids many classical linear solvers, including the matrix-free iterative methods.
We will propose a neural network representation of the unknowns $\{u_\Balpha\}$, which can be viewed as an approximation of the unknown vector with fewer free elements, and hence costs much less storage.

Since the linear system \eqref{20} is derived from a physical problem having a smooth unknown function $v$, it is expected that the unknowns $\{u_\Balpha\}$ are also distributed smoothly on $\Gamma$. Namely, the grid mapping $\chi:\Gamma\rightarrow \mathbb{R}^{N^d}$ defined by $\chi(\Bx_\Balpha)=u_\Balpha$ is spatially smooth (it means the data $\{\Bx_\Balpha,u_\Balpha\}_{\Balpha\in\Gamma}$ can be fit by a function with few high-frequency components). Thanks to the good approximation property for high-dimensional functions (e.g., see \cite{Klusowski2018,Siegel2020_1,Siegel2020_2,Caragea2020,E2022,Shen2020,Lu2021,Shen2021,Shen2021_2}), NNs can be employed to serve as the functioning of $\chi$. Specifically, we introduce an NN $\phi:\mathbb{R}^d\rightarrow\mathbb{R}$, and let $\phi(\Bx_\Balpha;\theta)$ be an approximation of $u_\Balpha$ for all $\Balpha\in\Lambda$. By this setting, the linear system \eqref{20} can be formulated into
\begin{equation}\label{04}
\sum_{\Balpha\in\Lambda} a_{m,\Balpha} \phi(\Bx_\Balpha;\theta)=b_m,\quad\text{for}~m=1,\dots,\widetilde{M},
\end{equation}
where the NN parameter set $\theta$ is the unknown. Note that \eqref{04} is actually a nonlinear system of equations due to the nonlinear structure of NNs. And the number of unknowns $|\theta|=O(M^2L+Md)$ is essentially different from $N^d$, the number of unknowns of the original linear system \eqref{20}. In high-dimensional cases, the former number can be much smaller (see Remark \ref{rmk01}).

For simplicity of notations, we can also formulate \eqref{20} and \eqref{04} as matrix-vector form. Without loss of generality, we assume the unknowns $\{u_\Balpha\}$ in \eqref{20} are ordered lexicographically in a column vector, namely
\begin{equation}\label{24}
\Bu := [u_{(1,1,\dots,1)}~u_{(1,1,\cdots,2)}~\cdots~u_{(N,N,\dots,N)}]^\top\in\mathbb{R}^{N^d},
\end{equation}
and let
\begin{gather}
\BA := \left[\begin{array}{cccc}
         a_{1,(1,1,\dots,1)} & a_{1,(1,1,\dots,2)} & \cdots & a_{1,(N,N,\dots,N)} \\
         \vdots & \vdots & \ddots & \vdots \\
         a_{\widetilde{M},(1,1,\dots,1)} & a_{\widetilde{M},(1,1,\dots,2)} & \cdots & a_{\widetilde{M},(N,N,\dots,N)}
       \end{array}\right]\in\mathbb{R}^{\widetilde{M}\times N^d},\\
\Bb := [b_1~\cdots~b_{\widetilde{M}}]^\top\in\mathbb{R}^{\widetilde{M}},
\end{gather}
then \eqref{20} can be written as
\begin{equation}\label{02}
\BA\Bu=\Bb.
\end{equation}

Similarly, we let
\begin{equation}\label{03}
\Bphi(\theta):=\left[\phi(\Bx_{(1,1,\dots,1)};\theta)~\phi(\Bx_{(1,1,\dots,2)};\theta)~\cdots~\phi(\Bx_{(N,N,\dots,N)};\theta)\right]^\top\in\mathbb{R}^{N^d},
\end{equation}
then \eqref{04} can be written as
\begin{equation}\label{22}
\BA\Bphi(\theta)=\Bb.
\end{equation}

Now we solve the nonlinear system \eqref{22} with $O(M^2L+Md)$ unknowns instead of the original linear system \eqref{02} with $N^d$ unknowns. And the vector $\Bphi(\theta)$ is an approximation of the original solution vector $\Bu$. Usually, the system \eqref{22} does not have an exact solution $\theta$. So we will find the least squares solution of \eqref{22} through the following optimization framework:
\begin{equation}\label{05}
\min_{\theta}~L(\theta)=\frac{1}{\widetilde{M}}\|\BA\Bphi(\theta)-\Bb\|_2^2.
\end{equation}
The loss function $L$ can be decreased via gradient descent methods under the NN learning framework.

To measure the error vector of large linear systems in a fair way, one usually uses the $\ell^2$-norm, which does not increase with the vector size, instead of the Euclidean norm $\|\cdot\|_2$. Specifically, for $\Bu\in\mathbb{R}^{N^d}$, we define
\begin{equation}\label{26}
\|\Bu\|_{\ell^2}:=\sqrt{\frac{\sum_{\Balpha\in\Lambda}|u_\Balpha|^2}{N^d}}=N^{-d/2}\|\Bu\|_2.
\end{equation}

If a feasible solution $\theta_0$ of \eqref{05} is found, it satisfies $L(\theta_0)=\frac{1}{\widetilde{M}}\|\BA\Bphi(\theta_0)-\Bb\|_2^2$, then the error between $\Bphi(\theta_0)$ and the true solution $\Bu$ of \eqref{02} is estimated by
\begin{multline}\label{23}
\|\Bphi(\theta_0)-\Bu\|_{\ell^2}\leq N^{-d/2}\|\Bphi(\theta_0)-\Bu\|_2\leq N^{-d/2}\|\BA^{-1}\|_2\|\BA(\Bphi(\theta_0)-\Bu)\|_2\\
= N^{-d/2}\|\BA^{-1}\|_2\|\BA\Bphi(\theta_0)-\Bb\|_2\leq\|\BA^{-1}\|_2\sqrt{L(\theta_0)},
\end{multline}
provided that $\BA$ is square ($\widetilde{M}=N^d$) and invertible. Therefore the above method finds an approximate solution with the error bounded by the norm of inverse matrix and the resulting minimized loss.

\subsection{Non-unique solutions}
If the linear system has more than one solution, the optimization \eqref{05} may not locate the particular solution we are looking for. One typical problem is the computation of the nontrivial solutions of homogeneous systems. For example, the eigenvectors of $\BA$ can be computed through $(\lambda\BI-\BA)\Bu=0$ given the eigenvalue $\lambda$. Another example is the computation of probability distribution in the overflow queuing problem \cite{Chan1987,Chan1988}.

Now we assume that the linear system
\begin{equation}\label{14}
\BA\Bu=0
\end{equation}
admits nontrivial solutions, and the nullspace is exactly one-dimensional. If we solve \eqref{14} via the unconstrained optimization \eqref{05}, sometimes the trivial solution $\Bu=0$ will be obtained. For instance, due to the implicit regularization of NNs \cite{Cao2021}, gradient descent in deep learning will probably converge to the smoothest solution, i,e. the zero solution. As we do not wish to admit the solution $\Bu=0$, we set constraints on the solution. One simple way is requiring $\|\Bu\|_p=1$ with some $p\in[1,\infty]$, then following \eqref{05} a penalized optimization for \eqref{14} is given by
\begin{equation}\label{15}
\min_{\theta}~L(\theta)=\frac{1}{\widetilde{M}}\|\BA\Bphi(\theta)\|_2^2+\varepsilon^{-1}\left(\|\Bphi(\theta)\|_p-1\right)^2,
\end{equation}
where $\epsilon>0$ is a penalty parameter.

A simpler way is fixing one component of $\Bu$, say $u_{\Balpha_0}=1$ for some multi-index $\Balpha_0$, if $u_{\Balpha_0}$ is known to be nonzero in advance. In this case, the penalized optimization for \eqref{14} is as follows,
\begin{equation}\label{16}
\min_{\theta}~L(\theta)=\frac{1}{\widetilde{M}}\|\BA\Bphi(\theta)\|_2^2+\varepsilon^{-1}\left(\phi(\Bx_{\Balpha_0};\theta)-1\right)^2.
\end{equation}

\subsection{Mini-batch gradient descent algorithm}\label{sec:SGD}
We now describe a class of practical algorithms for the optimization of \eqref{05}. Algorithms for solving \eqref{15} and \eqref{16} can be derived in similar ways.

We rewrite $\BA$ row-wise as follows
\begin{equation}
\BA=[\Ba_1~\cdots~\Ba_{\widetilde{M}}]^\top,
\end{equation}
where $\Ba_k\in\mathbb{R}^{N^d}$ is a row of $\BA$
for $k=1,2,\ldots, \widetilde{M}$.
Then the optimization \eqref{05} can be rewritten as
\begin{equation}\label{06}
\min_{\theta}~L(\theta)=\frac{1}{\widetilde{M}}\sum_{k=1}^{\widetilde{M}}|\Ba_k^\top\Bphi(\theta)-b_k|^2.
\end{equation}
To decrease $L$, a gradient descent method will update $\theta$ in every iteration by
\begin{equation}
\theta\leftarrow\theta-\tau\nabla_\theta L(\theta)
\end{equation}
with the gradient
\begin{equation}\label{07}
\nabla_\theta L(\theta)=\frac{2}{\widetilde{M}}\sum_{k=1}^{\widetilde{M}}\left(\Ba_k^\top\Bphi(\theta)-b_k\right)\cdot \BJ[\Bphi(\theta)]^\top\Ba_k,
\end{equation}
where
\begin{equation}
\BJ[\Bphi(\theta)]=\left[\nabla_\theta \phi(\Bx_{(1,1,\dots,1)};\theta)~\nabla_\theta \phi(\Bx_{(1,1,\dots,2)};\theta)~\dots~\nabla_\theta \phi(\Bx_{(N,N,\dots,N)};\theta)\right]^\top
\end{equation}
is the Jacobian matrix of $\Bphi(\theta)$ and $\tau>0$ is some adaptive learning rate.

However, it is sometimes undesirable to use all $N^d$ terms in \eqref{07} due to computational expense. So in practice, one can use mini-batch gradient descent by selecting a small batch of all terms for training. More precisely, in every iteration, a small subset $\mathcal{S}$ is selected from $\{1,\dots,\widetilde{M}\}$ according to some principles (i.e. random sampling), and $\theta$ is then updated by
\begin{equation}
\theta\leftarrow\theta-\tau\nabla_\theta L_\mathcal{S}(\theta),
\end{equation}
where $L_\mathcal{S}(\theta)=\frac{1}{|\mathcal{S}|}\sum_{k\in\mathcal{S}}|\Ba_k^\top\Bphi(\theta)-b_k|^2$. This algorithm is known as mini-batch gradient descent and is shown in Algorithm \ref{alg01}.

\begin{algorithm}
\caption{NN-based mini-batch gradient descent to solve the linear system $\BA\Bu=\Bb$}
\label{alg01}
\begin{algorithmic}
\REQUIRE hyper-parameters $L$, $M$, $\sigma$, $\{\tau_i\}$; initial guess $\theta_0$.
\ENSURE an approximate solution $\Bphi(\theta)\approx\Bu$.
\STATE initialize $\phi(x;\theta)\in\mathcal{F}_{L,M,\sigma}$ with $\theta\leftarrow\theta_0$
\STATE $i\leftarrow1$
\WHILE{stopping criteria is not satisfied}
\STATE generate $\mathcal{S}\subset\{1,\dots,\widetilde{M}\}$
\STATE evaluate $b_k$ for $k\in\mathcal{S}$
\STATE $\theta\leftarrow\theta-\frac{2\tau_i}{|\mathcal{S}|}
{\displaystyle \sum_{k\in\mathcal{S}} }
\left(\Ba_k^\top\Bphi(\theta)-b_k\right)\cdot\BJ[\Bphi(\theta)]^\top\Ba_k$
\STATE $i\leftarrow i+1$
\ENDWHILE
\STATE return $\Bphi(\theta)$
\end{algorithmic}
\end{algorithm}

We remark that for linear systems with moderately small sizes, choosing $\mathcal{S}=\{1,\dots,\widetilde{M}\}$ is tolerable in the sense of computational cost. In this case, Algorithm 1 computes not only the matrix-vector products, but also the NN-related quantities $\Bphi(\theta)$ and $\BJ[\Bphi(\theta)]$. Therefore, this method might be computationally more expensive than traditional iterative methods, which only conduct matrix-vector multiplications, for small-scale linear systems.

However, Algorithm \ref{alg01} is able to address very large-scale linear systems that traditional methods may not handle. On the one hand, in every iteration, we choose a small subset (indexed by $\mathcal{S}$) of all equations for computation, so only those matrix rows and vector entries that are necessary for the current batch are assessed. We do not need to save the entire matrix or vector in memory (especially in the case that $\BA$ is matrix-free and $b_k$ can be computed in real-time for any $k$). On the other hand, to save the intermediate solution, it suffices to save $\theta$ in memory, whose size can be much smaller than the number of entries of $\Bu$. And we only need to compute a few entries of $\Bphi(\theta)$ at the position where $\Ba_k$ is nonzero, rather than the entire $\Bphi(\theta)$.

The complexity of every iteration in the {\tt while}-loop can be estimated. For a special problem, the dimension $d$ is always fixed, so we do not involve it in the estimation. In usual situations, each $b_k$ can be evaluated with complexity that is independent of $L$, $M$, and $N$ (e.g., evaluating a given function at some grid point), so the evaluation of $b_k$ for all $k\in\mathcal{S}$ costs $O(|\mathcal{S}|)$ FLOPS. Next, we use $N_\text{nz}$ to denote the maximal number of nonzero entries of $\Ba_k$ for all $k\in\{1,\dots,\widetilde{M}\}$. For example, the matrix derived from the centered finite difference scheme on Poisson's equation satisfies $N_\text{nz}=2d+1$ (See Section \ref{sec:Poisson}). By a simple calculation, we can derive that it costs $O(LM^2)$ FLOPS to compute $\phi(\Bx;\theta)$ or $\nabla_\theta\phi(\Bx;\theta)$ for each point $\Bx\in\mathbb{R}^d$, namely, doing forward and back propagations of NNs. Then the complexity of computing the vector $\Bphi(\theta)$ or the matrix $\nabla_\theta\Bphi(\theta)$ is $O(|\theta|LM^2)=O(L^2M^4)$. Hence the complexity of computing $\Ba_k^\top\Bphi(\theta)$ and $\nabla_\theta\Bphi(\theta)^\top\Ba_k$ for all $k\in\mathcal{S}$ is $O(|\mathcal{S}|N_\text{nz}L^2M^4)$. Therefore, every iteration in the {\tt while}-loop costs $O(|\mathcal{S}|N_\text{nz}L^2M^4)$ FLOPS.

It is worth mentioning that the optimization \eqref{05} can also be solved by other mini-batch gradient-based optimizers (e.g., Adam \cite{Kingma2014}). The algorithms using these optimizers can be developed like Algorithm \ref{alg01}. Besides, we remark that the proposed method is not limited to linear systems with tensor product structures. In Algorithm \ref{alg01}, $\BA$ is a general sparse matrix. We do not require $\BA$ to have any more special structures (e.g., banded or Toeplitz matrices). This is essentially different from some existing methods for large-scale linear systems that rely on special properties of the matrix.

\section{Error analysis}
We will give an error analysis of the proposed method in this section. The method is developed based on the approximation property of NNs for smooth functions. Therefore, the analysis should depend on the regularity hypothesis of the target function $v$. A series of recent literatures \cite{Lu2017,Liang2017,Yarotsky2017,E2018,Montanelli2019,Suzuki2018,E2019_2,Petersen2018,Montanelli2019_2,Yarotsky2018,Shen2019,Shen2020} have developed many
results of the NN approximation theory. Here, we consider the NN approximation for Barron-type functions, which is studied extensively in \cite{E2020,E2020_2,Siegel2020_1,Siegel2020_2,Caragea2020,E2022}. The authors show that the approximation error of two-layer FNNs for Barron-type functions is independent of the dimension or increases with it very slowly, hence overcoming the curse of dimensionality. Among various types of Barron spaces, we specifically use the definition described in \cite{E2022}, which corresponds to infinitely wide two-layer ReLU FNNs. The definition and properties of the Barron space/functions will be introduced in this section. Without loss of generality, it is still assumed that $\Omega=[0,1]^d$ in the following discussion.

As discussed in the Section \ref{Sec_FNN}, the class of two-layer (i.e., $L=2$) ReLU FNNs can be reformulated as follows,
\begin{equation}
\mathcal{F}_{2,M,\text{ReLU}}=\left\{\phi:~\phi(\Bx)=\frac{1}{M}\sum_{i=1}^Ma_i\sigma(\Bb_i^\top \Bx+c_i),\quad\forall(a_i,\Bb_i,c_i)\in \mathbb{R}\times\mathbb{R}^d\times\mathbb{R}\right\}.
\end{equation}
Without ambiguity, we specify $\sigma(y)=\max\{0,y\}$ being the ReLU activation function throughout this section.

We consider functions $f_\pi:\mathbb{R}^d\rightarrow\mathbb{R}$ that admit the following representation
\begin{equation}\label{33}
f(\Bx)=\int_{\Omega'}a\sigma(\Bb^\top \Bx+c)\pi(\td a,\td \Bb,\td c)=\mathbb{E}_\pi[a\sigma(\Bb^\top \Bx+c)],\quad\forall \Bx\in\mathbb{R}^d,
\end{equation}
where $\Omega'=\mathbb{R}\times\mathbb{R}^d\times\mathbb{R}$ and $\pi$ is a probability distribution on $(\Omega,\Sigma_{\Omega'})$, with $\Sigma_{\Omega'}$ being a Borel $\sigma$-algebra on $\Omega'$. This representation can be seen as a continuum analog of the two-layer ReLU FNNs in $\mathcal{F}_{2,M,\text{ReLU}}$ as $M\rightarrow\infty$. We remark that in general, there are more than one $\pi$'s such that \eqref{33} is satisfied.

Now let us introduce the Barron space and its norm with respect to $\mathcal{F}_{2,M,\text{ReLU}}$. For functions that admit the representation \eqref{33}, its Barron norm is defined as
\begin{equation}
\|f\|_\mathcal{B}:=\underset{\pi}{\inf}\left(\int_{\Omega'}a^2(\|\Bb\|_1+|c|)^2\pi(\td a,\td \Bb,\td c)\right)^{1/2}=\underset{\pi}{\inf}~\left(\mathbb{E}_\pi[a^2(\|\Bb\|_1+|c|)^2]\right)^{1/2},
\end{equation}
where the infimum is taken over all $\pi$ such that \eqref{33} holds for all $\Bx\in\mathbb{R}^d$. The infimum of the empty set is considered as $+\infty$. The set of all functions with finite Barron norm is denoted by $\mathcal{B}$. Note that $\mathcal{B}$ equipped with the Barron norm is shown to be a Banach space that is named as Barron space \cite{E2022}. Some examples of Barron functions are given in \cite{Barron1993}, including Gaussian density, positive definite functions, smooth functions with high-order derivatives, etc. (see \cite{E2022} for a mathematical connection between the Barron definitions in \cite{Barron1993} and in this paper). The following result characterizes the approximation error of NNs in $\mathcal{F}_{2,M,\text{ReLU}}$ for functions in $\mathcal{B}$.
\begin{lemma}[Theorem 12, \cite{E2020_2}]\label{lem01}
For any $f\in\mathcal{B}$ and any $M\in\mathbb{N}^+$, there exists a two-layer ReLU FNN $\phi$ in $\mathcal{F}_{2,M,\text{ReLU}}$ such that
\begin{equation}\label{10}
\|f-\phi\|_{L^\infty([0,1]^d)}\leq4\|f\|_\mathcal{B}\sqrt{\frac{d+1}{M}}.
\end{equation}
\end{lemma}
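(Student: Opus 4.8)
The plan is to prove Lemma~\ref{lem01} by a Monte Carlo (Maurey-type) construction: realize $v$ as an expectation against a near-optimal Barron measure, replace that expectation by an empirical average of $M$ i.i.d.\ samples, and control the uniform error of the resulting random two-layer network by symmetrization together with a contraction estimate.

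\emph{Steps 1--2 (normalization and random approximant).} Fix $\varepsilon>0$ and choose $\pi\in\Pi_v$ with $\mathbb{E}_\pi[a^2(|b|+|c|)^2]\le\|v\|_{\mathcal B}^2+\varepsilon$. Using the positive $1$-homogeneity of ReLU one can write $a\sigma(b^Tx+c)=\hat a\,\sigma(\hat b^T\hat x)$ with $\hat x=(x,1)$, $\hat b=(b,c)/(|b|+|c|)$ so that $|\hat b|_1=1$, and $\hat a=a(|b|+|c|)$; pushing $\pi$ forward through this change of variables gives a probability measure $\hat\pi$ with $v(x)=\mathbb{E}_{\hat\pi}[\hat a\,\sigma(\hat b^T\hat x)]$, $\mathbb{E}_{\hat\pi}[\hat a^2]\le\|v\|_{\mathcal B}^2+\varepsilon$, and $\|\hat x\|_\infty=1$ on $[0,1]^d$. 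Drawing $(\hat a_i,\hat b_i)$, $i=1,\dots,M$, i.i.d.\ from $\hat\pi$ and setting $\phi_M(x)=\frac1M\sum_{i=1}^M\hat a_i\sigma(\hat b_i^T\hat x)$, we get (after undoing the augmentation) $\phi_M\in\mathcal{F}_{2,M,\text{ReLU}}$ with $\mathbb{E}[\phi_M(x)]=v(x)$ pointwise. It then suffices to show $\mathbb{E}\,\|v-\phi_M\|_{L^\infty([0,1]^d)}\le 4\sqrt{(\|v\|_{\mathcal B}^2+\varepsilon)(d+1)/M}$, because this forces some realization to meet the same bound, and letting $\varepsilon\downarrow 0$ yields \eqref{10}.

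\emph{Step 3 (uniform error).} Symmetrizing over the i.i.d.\ samples, $\mathbb{E}\,\|v-\phi_M\|_\infty\le\frac2M\,\mathbb{E}\,\mathbb{E}_\epsilon\sup_{x\in[0,1]^d}\big|\sum_i\epsilon_i\hat a_i\sigma(\hat b_i^T\hat x)\big|$ with Rademacher signs $\epsilon_i$. Writing $\hat a_i\sigma(\hat b_i^T\hat x)=\mathrm{sign}(\hat a_i)\,\sigma((|\hat a_i|\hat b_i)^T\hat x)$ and absorbing the sign into $\epsilon_i$, the Ledoux--Talagrand contraction inequality (for the uniformly $1$-Lipschitz map $\sigma$ with $\sigma(0)=0$) peels off $\sigma$ at the cost of a factor $2$, leaving $\mathbb{E}_\epsilon\sup_x\big|\big(\sum_i\epsilon_i|\hat a_i|\hat b_i\big)^T\hat x\big|\le\mathbb{E}_\epsilon\big\|\sum_i\epsilon_i|\hat a_i|\hat b_i\big\|_1$, since $\|\hat x\|_\infty\le1$. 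Estimating this $\ell_1$-norm coordinatewise via $\mathbb{E}_\epsilon|\sum_i\epsilon_i t_i|\le(\sum_i t_i^2)^{1/2}$, then Cauchy--Schwarz over the $d+1$ coordinates together with $\|\hat b_i\|_2\le|\hat b_i|_1=1$, bounds it by $\sqrt{d+1}\,(\sum_i\hat a_i^2)^{1/2}$, and Jensen gives $\mathbb{E}(\sum_i\hat a_i^2)^{1/2}\le(M\,\mathbb{E}_{\hat\pi}[\hat a^2])^{1/2}$. Multiplying the factors $\frac2M\cdot2\cdot\sqrt{d+1}\cdot\sqrt{M(\|v\|_{\mathcal B}^2+\varepsilon)}$ gives exactly the claimed bound.

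\emph{Main obstacle.} The delicate point is the uniform-in-$x$ ($L^\infty$) control: a pointwise variance estimate is immediate, but taking the supremum over $[0,1]^d$ requires the symmetrization/contraction machinery, and because the resampled outer weights $\hat a_i$ are only $L^2$-controlled rather than bounded, the $\mathrm{sign}$/homogeneity rewriting is what legitimizes the contraction step. Obtaining the clean constant $4$ and the factor $\sqrt{d+1}$ (rather than a worse dimension dependence) hinges on pairing $\|\hat x\|_\infty\le1$ with the $\ell_1$-ball constraint on $\hat b$ and a single application of Cauchy--Schwarz across coordinates.
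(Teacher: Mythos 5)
The paper does not prove this lemma; it is imported verbatim as Theorem 12 of the cited reference \cite{E2020_2}, so there is no in-paper argument to compare against. Your Monte Carlo construction with symmetrization, sign/homogeneity normalization, Ledoux--Talagrand contraction, and the $\ell_1$--$\ell_\infty$ pairing on the augmented variable is precisely the standard proof of that theorem, and the bookkeeping (factor $2$ from symmetrization, factor $2$ from contraction, $\sqrt{d+1}$ from the coordinatewise Cauchy--Schwarz, $\sqrt{M\,\mathbb{E}[\hat a^2]}$ from Jensen) correctly reproduces the constant $4\sqrt{(d+1)/M}$. The only loose end is the final ``letting $\varepsilon\downarrow 0$'': for each $\varepsilon$ you produce a network meeting the bound with $\|v\|_{\mathcal B}^2+\varepsilon$, which gives the infimum over $\mathcal{F}_{2,M,\mathrm{ReLU}}$ but not immediately a single network attaining \eqref{10}; this is repaired by a routine compactness argument on the normalized parameters ($\hat b_i$ on the unit sphere, $\frac1M\sum_i\hat a_i^2$ bounded), so it is a cosmetic rather than substantive gap.
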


However, the solution of the linear system is a grid function defined merely at the set of grid points $\Gamma$ rather than a continuous domain. Therefore, we define the following norm for grid functions based on the above Barron norm; namely, for any $\Bu\in\mathbb{R}^{N^d}$,
\begin{equation}\label{25}
\|\Bu\|_{\mathcal{B},\Gamma}:=\inf\|f\|_\mathcal{B},
\end{equation}
where the infimum is taken over all $f\in\mathcal{B}$ such that $f(\Bx_\Balpha)=u_\Balpha$, $\forall\Balpha\in\Lambda$. Briefly, $\|\Bu\|_{\mathcal{B},\Gamma}$ is the minimal Barron norm among all Barron functions that interpolate $\Bu$ at $\Gamma$. Since $\Gamma$ is finite, there always exists some $C^\infty(\mathbb{R}^{N^d})$ function with a compact support that interpolates $\Bu$ at $\Gamma$, which is a Barron function (see \cite{Barron1993}). So the infimum in \eqref{25} will never be taken on the empty set. And it is trivial to show \eqref{25} is a well-defined norm.

Now let us consider our method, i.e., the NN-based minimization \eqref{05}. The following result shows that the error of our proposed method is bounded by the product of the condition number of the matrix and the NN approximation error. Recall that $\|\cdot\|_{\ell^2}$ is defined by \eqref{26}.
\begin{theorem}\label{thm01}
Suppose $\BA$ in \eqref{05} is square and invertible. Let $\theta^*$ be a minimizer of \eqref{05} with $\phi$ being an FNN of depth $L$ and width $M$. Let $\Bu$ be the solution of the linear system \eqref{02}. Then it satisfies
\begin{equation}\label{21}
\|\Bphi(\theta^*)-\Bu\|_{\ell^2}\leq 4\kappa(\BA)\|\Bu\|_{\mathcal{B},\Gamma}\sqrt{\frac{d+1}{M}},
\end{equation}
where $\kappa(\BA):=\|\BA\|_2\|\BA^{-1}\|_2$ is the condition number of $\BA$.
\end{theorem}
\begin{proof}
Let $\tilde{v}\in\mathcal{B}$ be the function taking the infimum in \eqref{25}. Then by Lemma \ref{lem01}, there exists some $\theta'$ such that $\phi(x;\theta')$ satisfies
\begin{equation}\label{29}
\|\tilde{v}(x)-\phi(x;\theta')\|_{L^\infty(\Omega)}\leq4\|\tilde{v}\|_\mathcal{B}\sqrt{\frac{d+1}{M}}.
\end{equation}
Similar to \eqref{24}, we denote
\begin{equation}\label{11}
\tilde{\Bv}:= [\tilde{v}(\Bx_{(1,1,\dots,1)})~\tilde{v}(\Bx_{(1,1,\dots,2)})~\cdots~\tilde{v}(\Bx_{(N,N,\dots,N)})]^\top\in\mathbb{R}^{N^d}.
\end{equation}
Since $\theta^*$ is the minimizer of \eqref{05}, we have
\begin{multline}\label{27}
\|\Bphi(\theta^*)-\Bu\|_2
\leq\|\BA^{-1}\|_2\|\BA\Bphi(\theta^*)-\Bb\|_2
\leq\|\BA^{-1}\|_2\|\BA\Bphi(\theta')-\Bb\|_2\\
\leq\|\BA^{-1}\|_2\|\BA\|_2\|\Bphi(\theta')-\Bu\|_2
=\kappa(\BA)\|\Bphi(\theta')-\Bu\|_2.
\end{multline}
Note that $\tilde{v}(\Bx_\Balpha)=u_\Balpha$, $\forall\Balpha\in\Lambda$, we have
\begin{equation}\label{28}
\|\Bphi(\theta')-\Bu\|_2
=\|\Bphi(\theta')-\tilde{\Bv}\|_2
\leq N^{d/2}\|\Bphi(\theta')-\tilde{\Bv}\|_\infty
\leq 4N^{d/2}\|\tilde{v}\|_\mathcal{B}\sqrt{\frac{d+1}{M}},
\end{equation}
where the NN approximation \eqref{29} is used. Then \eqref{21} directly follows \eqref{27}, \eqref{28} and the fact that $\|\Bphi(\theta^*)-\Bu\|_{\ell^2}=N^{-d/2}\|\Bphi(\theta^*)-\Bu\|_2$.
\end{proof}

\begin{remark}\label{rmk01}
We can estimate how wide a two-layer NN should be to obtain acceptable accuracy $\epsilon$ under Theorem \eqref{thm01}. As a typical example, we consider the linear system derived from the centered finite difference method in solving a $d$-dimensional Poisson's equation (see Section \ref{sec:Poisson}). Supposing the finite difference scheme has a $p$-th order truncation error, then it can be proved that $\kappa(\BA)\leq CN^p$, where $C$ is independent of $d$ and $N$. From \eqref{21}, it suffices to let $M\geq 16C^2N^{2p}\|u\|_{\mathcal{B},\Gamma}^2(d+1)/\epsilon^2$. In this case, the number of unknowns $|\theta|=(d+2)M \sim O(N^{2p}\|u\|_{\mathcal{B},\Gamma}^2d^2/\epsilon^2)$. This number can be compared with $N^d$, the number of unknowns in the original linear system \eqref{20}. Supposing $\|u\|_{\mathcal{B},\Gamma}$ only increases with $d$ mildly, then $|\theta|$ is less than $N^d$ if $d>2p$, and their difference is much more significant as $d$ increases. This implies our method contains fewer unknowns than traditional linear solvers in high-dimensional problems.
\end{remark}

Note that the error bound \eqref{21} involves the norm $\|\Bu\|_{\mathcal{B},\Gamma}$ and the condition number $\kappa(\BA)$, which both depend on $d$ and $N$. But the relations are usually implicit. For $\|\Bu\|_{\mathcal{B},\Gamma}$, it is small if $\Bu$ has a small interpolant in the sense of Barron norm. And one can infer from \cite{Barron1993} that a function has a small Barron norm if its Fourier transform decays to zero quickly as the frequency increases. Roughly speaking, such functions have ``smooth" images. Consequently, we can simply conclude that $\|\Bu\|_{\mathcal{B},\Gamma}$ is small if the $d$-dimensional mesh of $\Bu$ looks smooth. This is usually true in physical problems because $\Bu$ is an approximation of a smooth physical function. However, we do not have an explicit formula to estimate $\|\Bu\|_{\mathcal{B},\Gamma}$ for specific problems.

In Theorem \ref{thm01}, we specify $\theta^*$ as the global minimizer of \eqref{05} However, In neural network optimization, it is usually difficult to find global minimizers numerically due to nonconvexity of loss  functions. To the best of our knowledge, there is no optimizers that guarantee to identify a global minimizer. Consequently, the overall error is also affected by the optimization error, which is the difference between the theoretical global minimizer $\theta^*$ and the actually found solution $\theta$.

In this paper, we only discuss the approximation property of two-layer shallow networks and take it to figure out the error estimate. We need to mention that the approximation properties of deep networks with $L>2$ have also been studied (\cite{Lu2021,Shen2020,Shen2021,Shen2021_2}). For example, it is proved in \cite{Lu2021} that if $f$ is a $C^s$ smooth function in $[0,1]^d$ with $s\in\mathbb{N}^+$, then there exists a ReLU FNN $\phi$ with width $O(J\log(J))$ and depth $O(K\log(K))$ such that $\|f-\phi\|_{L^\infty([0,1]^d)}\leq O(J^{-2s/d}K^{-2s/d})$ for all $J,K\in\mathbb{N^+}$. By these approximation properties, error estimates for deep networks can also be derived in a similar way as in Theorem \ref{thm01}, and the error bound can be much sharper if $v$ is more special.

\section{Numerical experiments}\label{sec:numerical_experiments}
In this section, several linear systems from physical problems are solved by the proposed NN-based method. Due to the high nonlinearity of NNs concerning their parameters, the least-squares optimization in this method is very nonlinear and, hence, difficult to solve. In practice, we can only find local minimizers rather than global minimizers, which causes a certain amount of optimization error. The optimization error then limits the accuracy of the numerical solution. Consequently, for small linear systems, our NN-based method performs less accurately than well-developed traditional methods (e.g., conjugate gradient method and GMRES), which can achieve errors around machine precision for well-conditioned systems. Nevertheless, the proposed method is capable of extremely large linear systems that traditional methods cannot deal with. In this paper, we choose extremely large systems as numerical examples that are exclusively solved by our method, and we do not have any comparison tests.

Our algorithm is implemented on PyTorch with the CUDA platform. The implementation is not picky for hyper parameters, and the numerical results are usually stable, so we do not spend much time on tuning parameters. Specifically, for neural network initialization, the results do not show any significant differences between the default initialization used by us and other common initialization methods (e.g., Xavier's initialization \cite{Glorot2010} and He's initialization \cite{He2015}). Moreover, we observe in experiments that as long as the batch size is moderately large, enlarging batch sizes will slightly speed up the error decay but can hardly improve the errors of the final numerical solution. In our experiments, suitable batch sizes are chosen according to the time-memory trade-off. For the number of iterations, we empirically set it to make sure the mean of the losses of the last $100$ iterations does not exceed $0.001\%$ of the initial loss. More details about the experiment settings can are listed as follows.

\begin{itemize}
  \item {\em Environment.}
  The method is tested in Python environment. PyTorch library (version 1.10.1) with CUDA toolkit (version 11.3) is utilized for NN implementation and GPU-based parallel computing.
We upload the programs\footnote{The programs can be found in
the website: {\tt https://hkumath.hku.hk/$\sim$mng/mng$
\underline{\mbox{\hspace{2mm}}}$files/dlearn-code.zip}}
for readers and
researchers to generate experimental results.

  \item {\em Optimizer and hyper-parameters.}
  The Algorithm \ref{alg01} is implemented. In each iteration of Algorithm \ref{alg01}, we randomly select a batch of grid points from $\Gamma$ with uniform distribution and take their indices to form $\mathcal{S}$. The learning rates are set to decay from $10^{-3}$ to $10^{-5}$ with linearly decreasing powers; namely, let $\tau_i$ be the learning rate of the $i$-th iteration and $I$ be the maximum number of iterations, then
  \begin{equation}
  \tau_i=10^{-3-\frac{2i}{I}}, \text{for}~i=1,\dots,I.
  \end{equation}
  We remark that we have also tried Adam optimizer, which can obtain smaller errors, but cost slightly more computational time compared with the standard mini-batch gradient descent. Due to the high accuracy of Adam, the results obtained by Adam in comparative tests are not as illustrative as those obtained by mini-batch gradient descent, so we only present the results of the latter.
  \item {\em Stopping criteria.}
  We set sufficiently many iterations for each example, which guarantee that the mean of the losses of the last $100$ iterations is less than $0.001\%$ of the initial loss. This stopping criteria is rigorous enough by our empirical experiences, from which promising conclusions can be derived.
  \item {\em Network setting.}
  FNNs with ReLU activation functions are used in the experiments. We implement the method with various depth $L$ and width $M$ to investigate their effects. The network parameters are initialized using the default setting of PyTorch library; namely,
  \begin{equation}
  a, W_\ell, b_\ell\sim U(-M^{-1/2},M^{-1/2}),\quad \ell=1,\cdots,L-1,
  \end{equation}
  which are generated with uniform distribution.
  \item {\em Testing set and error evaluation.}
  We prescribe a set of $N_\text{test}$ grid points from $\Gamma$ with uniform distribution and take their indices as the testing set, denoted as $\mathcal{T}:=\{(\alpha_1^{n},\cdots,\alpha_d^{n})\}_{n=1}^{N_\text{test}}$. For the examples given true solutions, we define the following $\infty$-error over $\mathcal{T}$ between the numerical solution $\Bphi(\theta)$ and true solution $\Bu$,
  \begin{equation}\label{30}
  \|\Bphi(\theta)-\Bu\|_{\infty,\mathcal{T}}:=\max_{\Balpha\in\mathcal{T}}|u_\Balpha-\phi(\Bx_\Balpha;\theta)|,
  \end{equation}
  and the $\ell^2$-error over $\mathcal{T}$,
  \begin{equation}\label{31}
  \|\Bphi(\theta)-\Bu\|_{\ell^2,\mathcal{T}}:=\left(\frac{1}{N_\text{test}}\sum_{\Balpha\in\mathcal{T}}|u_\Balpha-\phi(x_\Balpha;\theta)|^2\right)^\frac{1}{2}.
  \end{equation}
  And for the examples whose true solutions are unknown, we define the following $\ell^2$-residual over $\mathcal{T}$
  \begin{equation}
  \begin{split}
  \text{Res}_{\ell^2}(\mathcal{T}):=&\left(\frac{1}{N_\text{test}}\sum_{\Balpha\in\mathcal{T}}[\Bb-\BA\Bphi(\theta)]_{\zeta(n)}^2\right)^\frac{1}{2}\\
  =&\left(\frac{1}{N_\text{test}}\sum_{\Balpha\in\mathcal{T}}|b_{\zeta(n)}-\Ba_{\zeta(n)}^\top\Bphi(\theta)|^2\right)^\frac{1}{2},
  \end{split}
  \end{equation}
  where $\zeta(n)=\sum_{k=1}^d (\alpha_k^{n}-1)N^{d-k}+1$ is the position of the multiindex $\Balpha=(\alpha_1^{n},\cdots,\alpha_d^{n})$ in the lexicographical sequence. Note that the solution error is bounded by the product of the matrix inverse norm and the residual, i.e. $\|\Bphi(\theta)-\Bu\|\leq\|\BA^{-1}\|\|\BA\Bphi(\theta)-\Bb\|$, so small residuals imply small errors for well-conditioned problems. For all examples, we set $N_\text{test}=\min\{10^4,N^d\}$, noting that $N^d$ is the number of all grid points so $N_\text{test}$ is at most $N^d$.
  \item {\em Randomness.}
  To check the effect of the uncertainty of results caused by the randomness of the algorithm in the NN initialization and training data, we compute the mean of the errors/residuals of the last $100$ iterations as the ``final" error/residual. Moreover, we repeat each experiment using 10 different random seeds (commands \texttt{numpy.random.seed(n)} for stochastic NumPy subroutines and \texttt{torch.manual\_seed(n)} for PyTorch subroutines) and list the mean and standard deviation of the results (shown as ``mean $\pm$ standard deviation"). It shows in the following resulting tables that the standard deviations are always dominated by the means, so our implementation is numerically stable and convincing.
\end{itemize}

In this section, the matrices arising from numerical examples are quite well-conditioned. The condition numbers are increasing with $N$ and $d$ mildly, and they do not grow with $d$ exponentially (i.e., the curse of dimensionality). Hence we do not encounter an ill-conditioning issue. However, for problems with huge condition numbers, the convergence speed of the gradient descent optimizer will be vastly slowed down. If the condition number is large, one can only reduce the loss function to a small extent in every line search toward the gradient, even with the best step size. Hence it will cost a huge number of iterations to obtain the desired accuracy.

\subsection{Poisson's equation}\label{sec:Poisson}
We consider the Poisson's equation
\begin{equation}\label{09}
\begin{cases}
-\Delta v(\Bx) = f(\Bx),\quad\text{in}~\Omega:=[-1,1]^d,\\
v(\Bx)=0,\quad\text{on}~\partial\Omega,
\end{cases}
\end{equation}
which is an elliptic PDE describing a variety of steady-state physical phenomena. The physical solution of \eqref{09} is set as
\begin{equation}
v(\Bx)=\prod_{i=1}^d\sin(\pi x_i),
\end{equation}
where $x_i$ is the $i$-th component of $\Bx$.

A widely used approach for \eqref{09} is the second-order centered finite difference scheme with uniform grid spacing, which leads to the following matrix with tensor product structure (see \cite{Fan2017} for more details on the structure),
\begin{equation}\label{12}
\BA=\sum_{n=1}^d\underbrace{\BI_N\otimes\cdots\otimes\BI_N}_{n-1~\text{terms}}\otimes\BT\otimes\underbrace{\BI_N\otimes\cdots\otimes\BI_N}_{d-n~\text{terms}},
\end{equation}
where $\otimes$ denotes the Kronecker product; $\BI_N\in\mathbb{R}^{N\times N}$ is the identity matrix; $\BT=\left[T_{i,j}\right]\in\mathbb{R}^{N\times N}$ is given by
\begin{equation}
T_{i,j}=\begin{cases}-2/h^2,\quad j=i,\\1/h^2,\quad j=i\pm1,\\0,\quad\text{else};\end{cases}
\end{equation}
$h=2/(N+1)$ is the grid size.

In the implementation of our method, the batch size $|\mathcal{S}|$ of the mini-batch gradient descent is set to be $\min\{10^4,N^d\}$, and the maximum number of iterations is set to be $5\times10^4$. Note that if the total number of grid points $N^d$ is much larger than the sizes of training and testing sets, then with high probability, the randomly selected training and testing sets will be almost disjoint. But for small systems, the training and testing sets may overlap. And they even coincide if $N^d$ is smaller than $10^4$, in which case the training/testing set consists of all the grid points.

The condition number $\kappa(\BA)$ of this problem is of $O(N^2)$ and is independent of $d$ (\cite{Bottcher1998}). So the theoretical error bound given by \eqref{21} is $O(\|\Bu\|_{\mathcal{B},\Gamma}N^2d^{1/2}M^{-1/2})$, which is increasing with $N$ at least in the second order and decreasing with $M$ in the half order. We will show the error bound is coarse for the actual numerical results in the following tests.

\subsubsection{Test for small sizes}
In the first test, we set the right hand side $\Bb$ as the grid representation of the true physical function $f=-\Delta v=d\pi^2\prod_{i=1}^d\sin(\pi x_i)$. Also, let $\Bv$ be the grid representation of $v$; namely
\begin{equation}\label{32}
\Bv:= [v(\Bx_{(1,1,\dots,1)})~v(\Bx_{(1,1,\dots,2)})~\cdots~v(\Bx_{(N,N,\dots,N)})]^\top\in\mathbb{R}^{N^d}.
\end{equation}
Therefore, the true solution $\Bu$ of the linear system $\BA\Bu=\Bb$ is an approximation of the physical solution $\Bv$, up to a discretization error $O(N^{-2})$ (Theorem 4.2 in \cite{Larsson2003}). On the other hand, our method will numerically solve the linear system, obtaining $\Bphi(\theta)$, which is an approximation of $\Bu$.

The linear system is solved for $d=3$ and various $N$ ($N=6,12,24,48,96$). We implement Algorithm \ref{alg01} with the network size $(L,M)=(3,200)$, in which $\BA$ is applied as a matrix-free operator. Also, we use Matlab backslash to obtain a high-accuracy solution seen as the ``true" solution $\Bu$ for error evaluation. In the test, we note
the size of the linear system is at most $96^3$ by $96^3$, which is still tractable by Matlab sparse solver with high accuracy (the two-norm residual of the Matlab solution is $1.1\times10^{-9}$). The $\infty$-error and $\ell^2$-error between any two
quantities ($\Bphi(\theta)$, $\Bu$ and $\Bv$) over the testing set are presented
in Table \ref{Tab_case1_error_physical_right}. The error curves with respect to $N$ are shown in Figure. \ref{Fig_case1_error_physical_right}. We remark that $\|\cdot\|_{\infty,\mathcal{T}}$ and $\|\cdot\|_{\ell^2,\mathcal{T}}$ follow the definitions in \eqref{30} and \eqref{31}, respectively.

It is observed in Table \ref{Tab_case1_error_physical_right} and in Figure \ref{Fig_case1_error_physical_right} that on the one hand, the solution error $\|\Bphi(\theta)-\Bu\|$ is always below $O(10^{-4})$ for various $N$, though it increases mildly with $N$ since the linear system is larger and hence more difficult to solve as $N$ becomes larger. On the other hand, as expected by the theory, the discretization error $\|\Bu-\Bv\|$ decreases in the rate $O(N^{-2})$ but is still larger than the corresponding solution error $\|\Bphi(\theta)-\Bu\|$.
And hence the error $\|\Bphi(\theta)-\Bv\|$, which characterizes the accuracy of our method in solving the original continuous problem, is dominated by the discretization error. These results imply that for moderately small $N$, the accuracy of solving the linear system is high enough compared with the discretization itself. But if $N$ is further larger, the discretization error is likely to continue decreasing and be dominated by the solution error of the linear system. However, we cannot test larger $N$ due to the memory limitation in the use of Matlab direct solvers.

\begin{table}[h!]
\centering
\subfloat[$\infty$-errors]{
\begin{tabular}{l|ccc}
  \toprule
   & $\|\Bphi(\theta)-\Bu\|_{\infty,\mathcal{T}}$ & $\|\Bu-\Bv\|_{\infty,\mathcal{T}}$ & $\|\Bphi(\theta)-\Bv\|_{\infty,\mathcal{T}}$ \\\hline
$N=6$ & 2.397e-05 $\pm$ 3.299e-05& 6.480e-02 & 6.483e-02 $\pm$ 2.914e-05\\
$N=12$ & 6.114e-05 $\pm$ 2.479e-05& 1.927e-02 & 1.929e-02 $\pm$ 2.327e-05\\
$N=24$ & 1.701e-04 $\pm$ 1.935e-05& 5.249e-03 & 5.346e-03 $\pm$ 1.741e-05\\
$N=48$ & 1.731e-04 $\pm$ 1.745e-05& 1.336e-03 & 1.468e-03 $\pm$ 1.262e-05\\
$N=96$ & 1.774e-04 $\pm$ 1.372e-05& 3.383e-04 & 4.638e-04 $\pm$ 1.281e-05\\\bottomrule
\end{tabular}}\\
\subfloat[$\ell^2$-errors]{
\begin{tabular}{l|ccc}
  \toprule
   & $\|\Bphi(\theta)-\Bu\|_{\ell^2,\mathcal{T}}$ & $\|\Bu-\Bv\|_{\ell^2,\mathcal{T}}$ & $\|\Bphi(\theta)-\Bv\|_{\ell^2,\mathcal{T}}$ \\\hline
$N=6$ & 1.518e-05 $\pm$ 2.129e-05& 3.116e-02 & 3.116e-02 $\pm$ 4.927e-08\\
$N=12$ & 1.704e-05 $\pm$ 1.408e-05& 7.780e-03 & 7.852e-03 $\pm$ 1.724e-07\\
$N=24$ & 5.048e-05 $\pm$ 7.770e-06& 1.971e-03 & 1.985e-03 $\pm$ 2.177e-07\\
$N=48$ & 5.017e-05 $\pm$ 6.368e-06& 4.983e-04 & 5.024e-04 $\pm$ 7.598e-07\\
$N=96$ & 4.978e-05 $\pm$ 4.852e-06& 1.249e-04 & 1.351e-04 $\pm$ 2.032e-06\\\bottomrule
\end{tabular}}\\
\caption{\em Errors for various $N$ in the Poisson's equation with physical right hand sides.}
\label{Tab_case1_error_physical_right}
\end{table}

\begin{figure}
\centering
\subfloat[$\infty$-errors]{
\includegraphics[width=7cm,height=5cm]{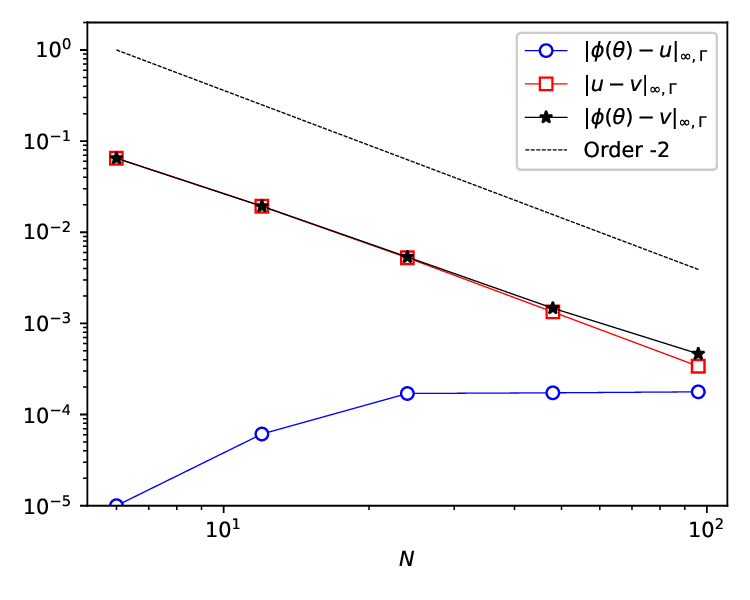}}
\subfloat[$\ell^2$-errors]{
\includegraphics[width=7cm,height=5cm]{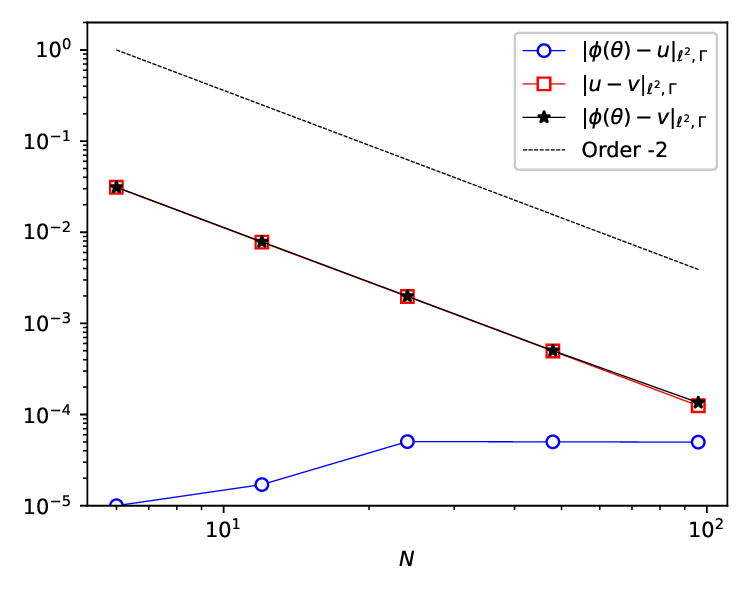}}
\caption{\em Errors versus $N$ in the Poisson's equation with physical right hand sides ($\Bu$ and $\Bphi(\theta)$ are the true and numerical solutions of the linear system, respectively; $\Bv$ is the grid representation of the true solution of the original Poisson's equation).}
\label{Fig_case1_error_physical_right}
\end{figure}

\subsubsection{Test for large sizes}
In this experiment, we turn to solve linear systems of larger sizes in $d$.
Unlike the preceding experiment, we cannot use high-accuracy traditional solvers to find a ``true" solution of the linear system. So we artificially set the true solution as $\Bu=\Bv$ given by \eqref{32}. Namely, we ignore the discretization error and directly take the physical solution as the solution of the linear system. The right hand side is thereafter computed as $\Bb=\BA\Bu$.

We solve $\BA\Bu=\Bb$ for $d=3$ and $6$ using Algorithm \ref{alg01}. The experiment is repeated for various $N$ ($N=10^2,10^3,10^4$) and FNN sizes $(L,M)$ ($L=2,3$ and $M=100,200$). The solution errors over the testing set are shown in Table \ref{Tab_case1_error}. It is observed that the solution error decreases as $L$ or $M$ increases. Also, it is surprising that the solution error does not differ too much for different values of $N$, which implies that the solution error bound given in \eqref{21} is coarse in terms of $N$. Moreover, the running time (i.e., training time for FNN) of the experiment is also reported in Table \ref{Tab_case1_error}. We can find that the running time increases with the network size, but it is almost unchanged for different values of $N$. So the degree of discretization $N$ does not have a strong effect on the efficiency of the method.

It is noted that the number of unknown parameters $|\theta|$ is at most $41200$ when $(L,M)=(3,200)$, which is much less than $N^d$, the size of the linear system. Specifically, in the case that $d=6$ and $N=10^4$, the size $N^d=10^{24}$ is extremely large\footnote{The ratio of $41200$/$10^{24}$ is about 4e-20.} that prevents one using traditional linear solvers, yet the proposed method is still effective, obtaining
$\infty$-errors is at best of $O(10^{-3})$.

We also present the error curve (over the testing set) versus iterations in Figure \ref{Fig_case1_loss} to visualize the dynamics of the optimization. It is observed the error decreases rapidly in the first few iterations and decreases slowly afterward. This means a rough solution can be obtained within much fewer iterations. Also, It can be seen that the error decreases of $L=3$ are more oscillatory than that of $L=2$. The reason is that the loss function \eqref{05} with deeper NNs is highly non-linear in terms of $\theta$, so the optimization is more difficult to be solved using the mini-batch gradient method.

\begin{table}[h!]\small
\centering
\subfloat[$\|\Bphi(\theta)-\Bu\|_{\infty,\mathcal{T}}$ ($d=3$)]{
\begin{tabular}{l|ccc}
  \toprule
  $(L,M)$ & $N=10^2$ & $N=10^3$ & $N=10^4$ \\\hline
$(2,100)$ & 4.578e-03 $\pm$ 1.042e-03& 4.612e-03 $\pm$ 1.094e-03& 4.443e-03 $\pm$ 8.440e-04\\
$(2,200)$ & 1.986e-03 $\pm$ 2.323e-04& 2.034e-03 $\pm$ 1.683e-04& 2.036e-03 $\pm$ 1.606e-04\\
$(3,100)$ & 2.577e-04 $\pm$ 2.449e-05& 2.609e-04 $\pm$ 2.893e-05& 2.646e-04 $\pm$ 3.021e-05\\
$(3,200)$ & 1.818e-04 $\pm$ 1.105e-05& 1.839e-04 $\pm$ 1.194e-05& 1.815e-04 $\pm$ 1.149e-05\\\bottomrule
\end{tabular}}\\
\subfloat[$\|\Bphi(\theta)-\Bu\|_{\infty,\mathcal{T}}$ ($d=6$)]{
\begin{tabular}{l|ccc}
  \toprule
  $(L,M)$ & $N=10^2$ & $N=10^3$ & $N=10^4$ \\\hline
$(2,100)$ & 1.703e-01 $\pm$ 3.143e-02& 1.709e-01 $\pm$ 3.449e-02& 1.718e-01 $\pm$ 2.643e-02\\
$(2,200)$ & 4.471e-02 $\pm$ 5.377e-03& 4.505e-02 $\pm$ 6.190e-03& 4.407e-02 $\pm$ 5.752e-03\\
$(3,100)$ & 1.301e-02 $\pm$ 1.366e-03& 1.393e-02 $\pm$ 1.377e-03& 1.339e-02 $\pm$ 1.074e-03\\
$(3,200)$ & 4.850e-03 $\pm$ 3.314e-04& 4.827e-03 $\pm$ 2.576e-04& 4.960e-03 $\pm$ 4.270e-04\\\bottomrule
\end{tabular}}\\
\subfloat[$\|\Bphi(\theta)-\Bu\|_{\ell^2,\mathcal{T}}$ ($d=3$)]{
\begin{tabular}{l|ccc}
  \toprule
  $(L,M)$ & $N=10^2$ & $N=10^3$ & $N=10^4$ \\\hline
$(2,100)$ & 9.506e-04 $\pm$ 2.932e-04& 9.337e-04 $\pm$ 3.043e-04& 8.818e-04 $\pm$ 2.080e-04\\
$(2,200)$ & 3.400e-04 $\pm$ 3.561e-05& 3.400e-04 $\pm$ 2.936e-05& 3.429e-04 $\pm$ 2.426e-05\\
$(3,100)$ & 6.511e-05 $\pm$ 7.664e-06& 6.434e-05 $\pm$ 6.953e-06& 6.448e-05 $\pm$ 7.084e-06\\
$(3,200)$ & 5.225e-05 $\pm$ 4.369e-06& 5.174e-05 $\pm$ 3.905e-06& 5.092e-05 $\pm$ 3.951e-06\\\bottomrule
\end{tabular}}\\
\subfloat[$\|\Bphi(\theta)-\Bu\|_{\ell^2,\mathcal{T}}$ ($d=6$)]{
\begin{tabular}{l|ccc}
  \toprule
  $(L,M)$ & $N=10^2$ & $N=10^3$ & $N=10^4$ \\\hline
$(2,100)$ & 1.678e-02 $\pm$ 1.690e-03& 1.643e-02 $\pm$ 1.939e-03& 1.638e-02 $\pm$ 1.810e-03\\
$(2,200)$ & 6.739e-03 $\pm$ 3.228e-04& 6.660e-03 $\pm$ 4.591e-04& 6.581e-03 $\pm$ 3.476e-04\\
$(3,100)$ & 2.128e-03 $\pm$ 1.454e-04& 2.084e-03 $\pm$ 1.522e-04& 2.087e-03 $\pm$ 1.504e-04\\
$(3,200)$ & 8.007e-04 $\pm$ 4.432e-05& 7.819e-04 $\pm$ 4.124e-05& 7.807e-04 $\pm$ 4.382e-05\\\bottomrule
\end{tabular}}
\caption{\em Errors for various $d$, $N$, $L$ and $M$ in the Poisson's equation.}
\label{Tab_case1_error}
\end{table}

\begin{table}[h!]\small
\centering
\begin{tabular}{l|ccc|ccc}
  \toprule
  & \multicolumn{3}{c}{$d=3$} & \multicolumn{3}{|c}{$d=6$}\\
  $(L,M)$ & $N=10^2$ & $N=10^3$ & $N=10^4$ & $N=10^2$ & $N=10^3$ & $N=10^4$\\\hline
$(2,100)$ & 5.5e+02 & 5.7e+02 & 5.5e+02 & 1.1e+03 & 1.1e+03 & 1.1e+03 \\
$(2,200)$ & 6.1e+02 & 6.1e+02 & 6.2e+02 & 1.2e+03 & 1.2e+03 & 1.2e+03 \\
$(3,100)$ & 7.1e+02 & 6.9e+02 & 6.9e+02 & 1.3e+03 & 1.4e+03 & 1.3e+03 \\
$(3,200)$ & 1.0e+03 & 1.0e+03 & 1.0e+03 & 2.0e+03 & 2.0e+03 & 2.0e+03 \\\bottomrule
\end{tabular}
\caption{\em Running time (seconds) for various $d$, $N$, $L$ and $M$ in the Poisson's equation.}
\label{Tab_case1_time}
\end{table}

\begin{figure}
\centering
\subfloat[$L=2,M=100$]{
\includegraphics[width=7cm,height=5cm]{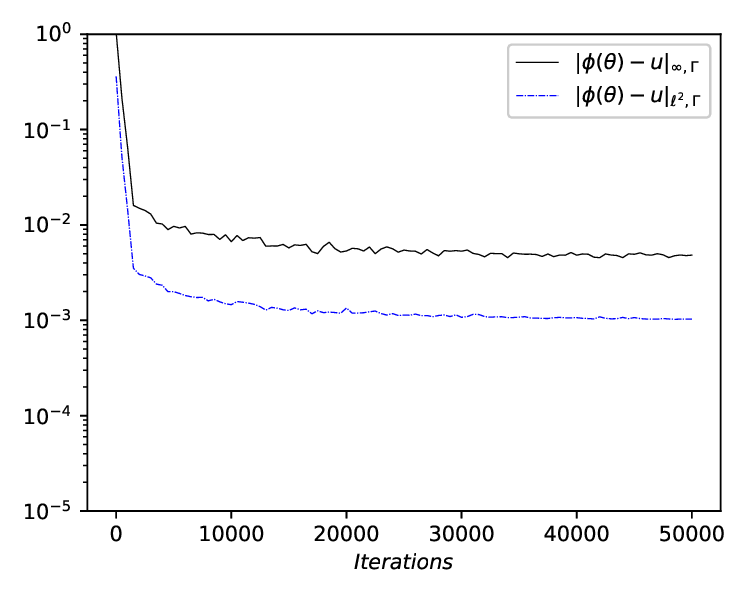}}
\subfloat[$L=2,M=200$]{
\includegraphics[width=7cm,height=5cm]{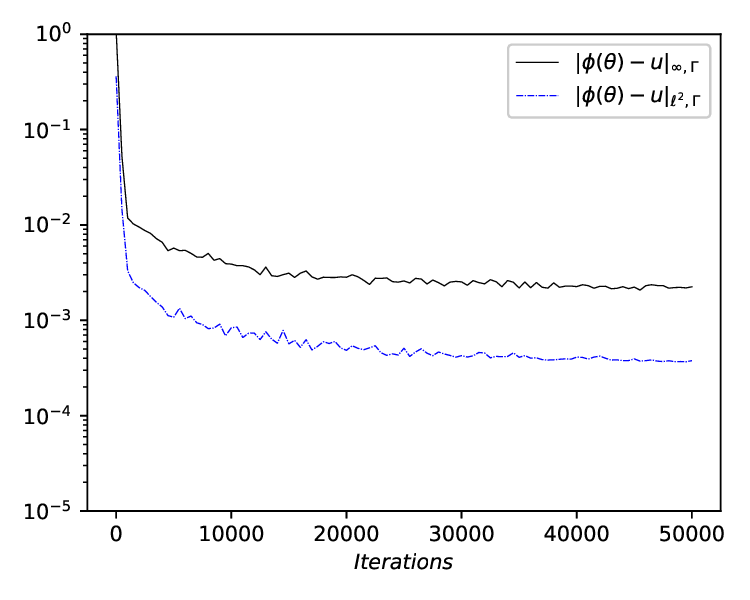}}\\
\subfloat[$L=3,M=100$]{
\includegraphics[width=7cm,height=5cm]{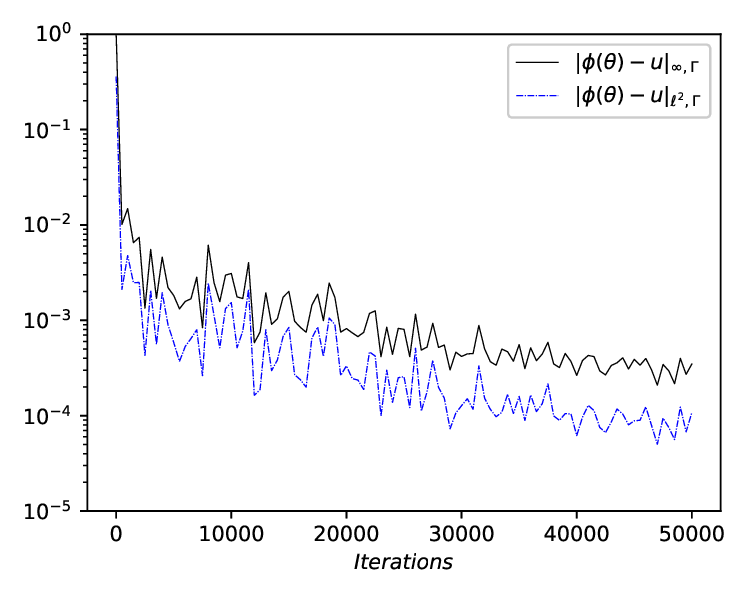}}
\subfloat[$L=3,M=200$]{
\includegraphics[width=7cm,height=5cm]{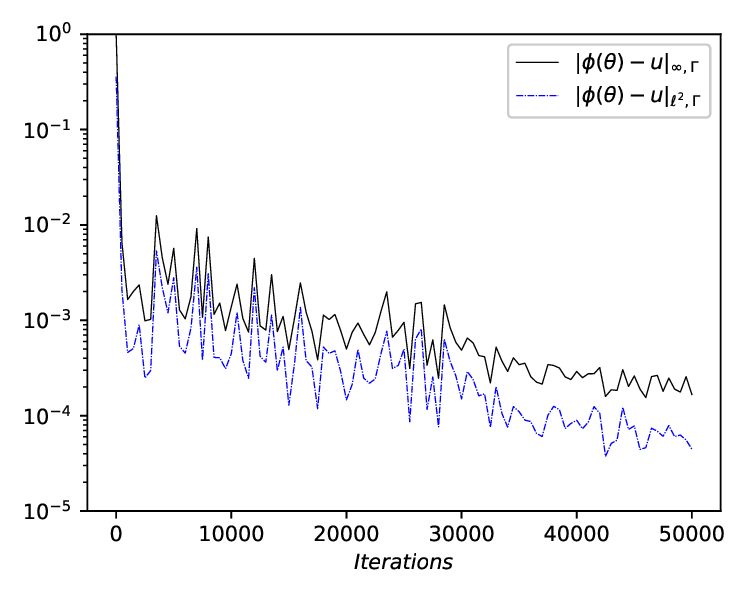}}
\caption{\em Errors versus iterations for various FNN sizes in the Poisson's equation ($d=3$).}
\label{Fig_case1_loss}
\end{figure}

\subsubsection{Error versus $M$}
We conduct a third experiment for Poisson's equation to investigate the relation between the solution error and the network width $M$ and compare the result with the solution error bound given in \eqref{21}. With the same setting as in the second experiment, artificial true solution $\Bu=\Bv$ is still used. We implement our method to solve the problem with $d=3$ and $N=1000$ using $L=2$ and various $M$ ($M=2^3,2^4,\dots,2^{13}$). The $\ell^2$-errors versus $M$ are shown in Figure \ref{Fig_case1_error_vs_M}. It is observed that when $M$ is relatively small ($M\leq2^{11}$ in the figure), the numerical error order is clearly faster than $-\frac{1}{2}$. This implies the error bound in \eqref{21} is slightly coarse in terms of $M$. Indeed, the solution error estimate in \eqref{21} is derived from the NN approximation error $O(M^{-1/2})$ for the class of Barron functions, but some special functions (e.g., the analytic function $v$ in Poisson's equation) might be approximated by NNs more tightly. But the decrease of the error curve becomes flat as $M$ continues increasing and exceeds some threshold ($M\geq2^{11}$ in the figure), in which cases the solution error is dominated by the optimization error of the gradient descent optimizer.
In addition, the running time for various $M$ is shown in Figure \ref{Fig_case1_error_vs_M}. It is clear that when $M$ is moderately large, the running time increases almost linearly with $M$.

\begin{figure}
\centering
\subfloat[$\ell^2$-errors]{
\includegraphics[width=7cm,height=5cm]{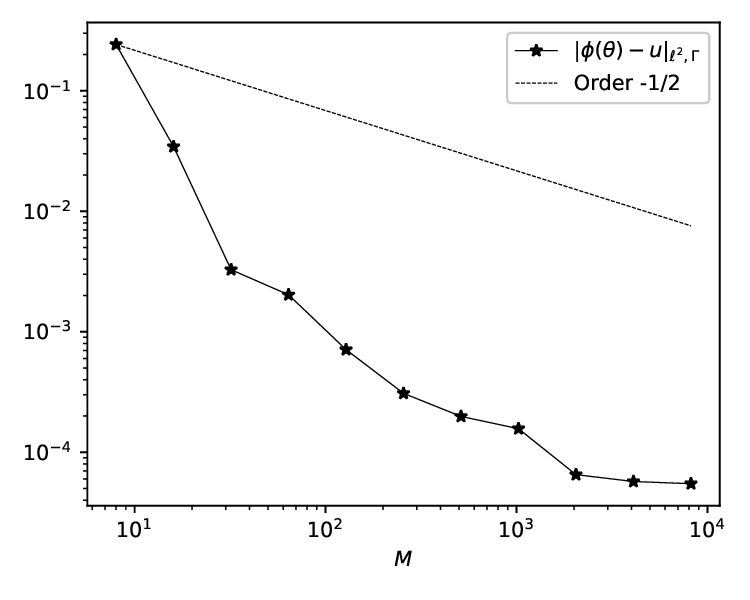}}
\subfloat[Running Time (s)]{
\includegraphics[width=7cm,height=5cm]{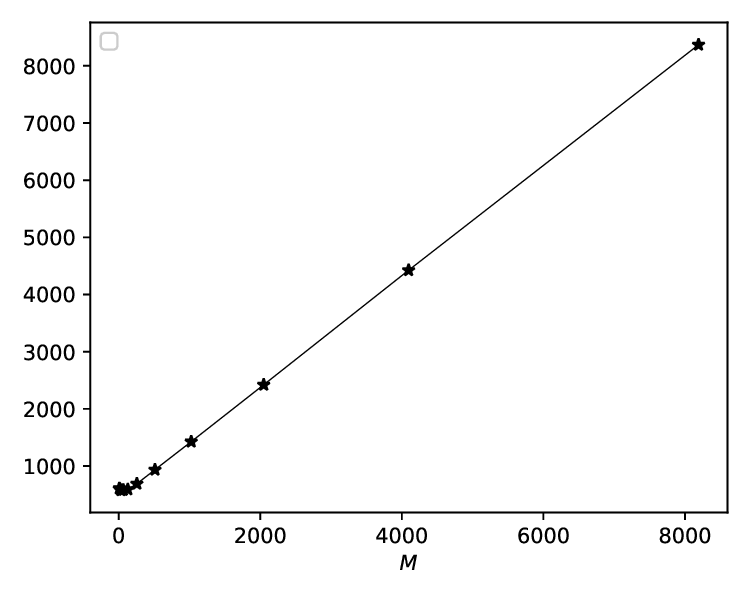}}
\caption{\em $\ell^2$-errors and running time (seconds) versus $M$ in the Poisson's equation.}
\label{Fig_case1_error_vs_M}
\end{figure}

\subsubsection{Error versus $L$}
Similar to the third experiment, we further investigate how the solution error behaves with deeper NNs by a fourth test. We still use the artificial true solution $\Bu=\Bv$. The problem with $d=3$ and $N=1000$ is solved again with various $L$ ($L=2,3,\dots,8$). The network width $M$ is set as $10$ or $20$. The $\ell^2$-errors versus $L$ are shown in Figure \ref{Fig_case1_error_vs_L}. A U-shaped curve is observed that the smallest error is attained when $L=5$. Consequently, for this experiment, our method achieves the best accuracy when $L=5$. It is reasonable to infer that although deeper NNs usually have smaller approximation errors in theory, they may not bring better numerical results because they are more difficult to train in the practical deep learning. In other words, smaller $L$ leads to larger approximation errors, and larger $L$ tends to bring larger optimization errors. We remark that the optimization difficulty is probably a consequence of the vanishing gradient problem, and it could be overcome by residual neural networks \cite{He2016} to some extent. The running time for various $L$ is also reported in Figure \ref{Fig_case1_error_vs_L}. Similar to the relation with $M$, the running time increases almost linearly with the depth $L$.

\begin{figure}
\centering
\subfloat[$\ell^2$-errors]{
\includegraphics[width=7cm,height=5cm]{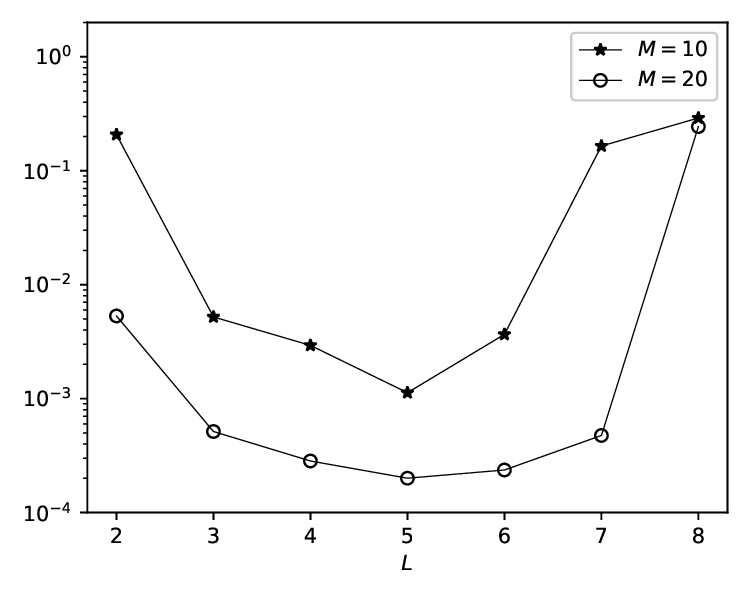}}
\subfloat[Running Time (s)]{
\includegraphics[width=7cm,height=5cm]{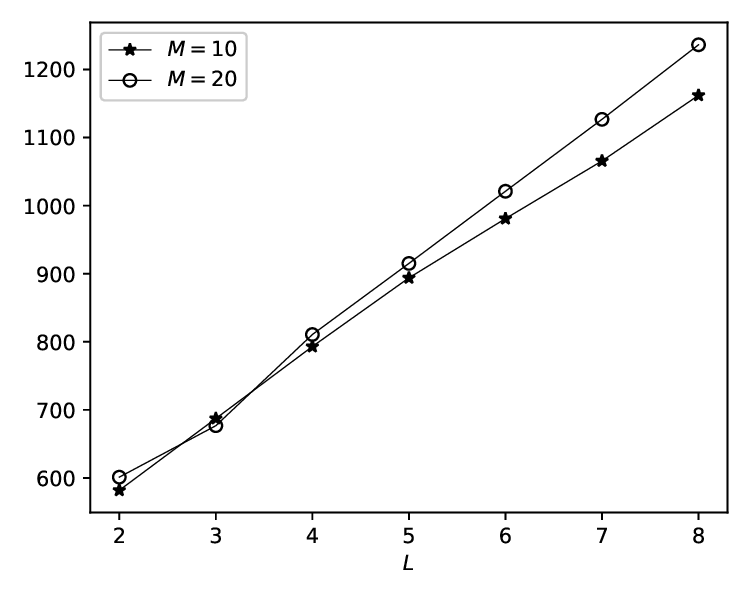}}
\caption{\em $\ell^2$-errors and running time (seconds) versus $L$ in the Poisson's equation.}
\label{Fig_case1_error_vs_L}
\end{figure}

\subsubsection{Some remarks}\label{sec:remark_Poisson}
In this example, we implement our method to solve the linear system derived from Poisson's equation with the finite difference method. We remark that this process is almost equivalent to using PINNs \cite{Raissi2019} to solve the same problem. One slight difference is that we address equidistant grid points, but in PINNs, the distribution of the points can be more general (e.g., uniformly distributed random points). Another is that we consider using finite difference schemes to compute the derivatives numerically, but in PINNs, the differentiation can be either analytically or numerically.

We also note that when the linear system is large, the mini-batch gradient descent cannot take all equations as the training set. For instance, in the test with $N=10^4$ and $d=6$, there are total $10^{24}$ equations, but we only use $5\times10^4$ iterations with batch size $10^4$. So most of the equations are actually not involved in the computation. In spite of a very tiny ratio of the training set to all data points, this method is still effective in finding a solution that is quite accurate globally. This is because the approximate NN can generalize well if the solution is smooth and less oscillatory. The residual of the
non-training equations is also reduced along with the minimization of the training set. It also partially explains why the results do not alter too much even if $N$ increases
quickly as long as the nature of the physical solution is unchanged.

\subsection{Riesz fractional diffusion}
In the second numerical example, we consider the following Riesz fractional diffusion equation:
\begin{equation}\label{13}
-\sum_{n=1}^d c_n\frac{\partial^{\alpha_n} v}{\partial|x_n|^{\alpha_n}} = y(\Bx),\quad\text{in}~\Omega:=[-1,1]^d
\end{equation}
where $c_n>0, 1<\alpha_n<2$ for all $n$ and $\frac{\partial^{\alpha_n} v}{\partial|x_n|^{\alpha_n}}$ is the Riesz fractional derivative (see \cite{Huang2021}). The physical solution of \eqref{13} is set as
\begin{equation}
v(\Bx)=\sin(\sum_{n-1}^dx_n),
\end{equation}
and the corresponding $y$ is therefore given by \eqref{13}.

Employing finite difference method on \eqref{13} leads to a linear system whose matrix $\BA$ is given by
\begin{equation}\label{17}
\BA=\sum_{n=1}^d\underbrace{\BI_N\otimes\cdots\otimes\BI_N}_{n-1~\text{terms}}\otimes\BT^{(n)}\otimes\underbrace{\BI_N\otimes\cdots\otimes\BI_N}_{d-n~\text{terms}},
\end{equation}
with $\BT^{(n)}$ being the Toeplitz matrix
\begin{equation}
\BT^{(n)}:=\left[
\begin{array}{cccccc}
  2t_1^{(n)} & t_0^{(n)}+t_2^{(n)} & t_3^{(n)} & \ddots & t_{N-1}^{(n)} & t_N^{(n)} \\
  t_0^{(n)}+t_2^{(n)} & 2t_1^{(n)} & t_0^{(n)}+t_2^{(n)} & t_3^{(n)} & \ddots & t_{N-1}^{(n)} \\
  \vdots & t_0^{(n)}+t_2^{(n)} & 2t_1^{(n)} & \ddots & \ddots & \vdots \\
  \vdots & \ddots & \ddots & \ddots & \ddots & t_3^{(n)} \\
  t_{N-1}^{(n)} & \ddots & \ddots & \ddots & 2t_1^{(n)} & t_0^{(n)}+t_2^{(n)} \\
  t_N^{(n)} & t_{N-1}^{(n)} & \cdots & \cdots & t_0^{(n)}+t_2^{(n)} & 2t_1^{(n)}
\end{array}
\right]\in\mathbb{R}^{N\times N},
\end{equation}
and $t_0^{(n)}:=\frac{c_n}{2\cos(\frac{\alpha_n\pi}{2})h^{\alpha_n}}$, $t_i^{(n)}=\left(1-\frac{\alpha_n+1}{i}\right)t_{i-1}^{(n)}$ for $1\leq i\leq N$. The condition number of $\BA$ is of $O(N^\alpha)$ with some $1<\alpha<2$, and some preconditioning techniques have been developed (see \cite{Lin2014,Donatelli2016,Moghaderi2017,Donatelli2020,Huang2021}). In prior work, the linear system with at most $d=3$ is solved.

In this experiment, we set $c_n=1$ and $\alpha_n=1.5$ for all $n$. We solve $\BA\Bu=\Bb$ for $d=5,10$ and $N=10$ using Algorithm \ref{alg01}, where the true solution is set as
$\Bu=\Bv$ with $\Bv$ being the grid representation of $v$. We set $|\mathcal{S}|=2\times10^4$ and the number of iterations to be $2\times10^4$. Results are shown in Table \ref{Tab_case2_error}. It is clear that the deeper networks with $L=3$ outperform the shallow ones with $L=2$ in general. It is also noted in the case $d=10$ that the wider networks with $M=200$ performs worse than the narrower one with $M=100$. It implies that larger networks are sometimes more difficult to train than smaller ones, resulting in greater optimization errors. Despite being more accurate in approximation, larger networks may not provide better numerical results in practice.

\begin{table}[h!]\small
\centering
\subfloat[$\|\Bphi(\theta)-\Bu\|_{\infty,\mathcal{T}}$]{
\begin{tabular}{l|cc}
  \toprule
  & $d=5$ & $d=10$ \\\hline
$(2,100)$& 1.059e-01 $\pm$ 2.054e-02& 4.063e-02 $\pm$ 1.206e-02\\
$(2,200)$& 8.205e-02 $\pm$ 3.100e-02& 6.516e-02 $\pm$ 2.599e-02\\
$(3,100)$& 2.947e-03 $\pm$ 4.333e-04& 8.435e-03 $\pm$ 6.248e-03\\
$(3,200)$& 2.349e-03 $\pm$ 2.032e-04& 1.675e-02 $\pm$ 6.920e-03\\\bottomrule
\end{tabular}}\\
\subfloat[$\|\Bphi(\theta)-\Bu\|_{\ell^2,\mathcal{T}}$]{
\begin{tabular}{l|cc}
  \toprule
  & $d=5$ & $d=10$ \\\hline
$(2,100)$& 4.584e-02 $\pm$ 1.382e-02& 9.701e-03 $\pm$ 2.949e-03\\
$(2,200)$& 2.368e-02 $\pm$ 1.393e-02& 1.443e-02 $\pm$ 5.987e-03\\
$(3,100)$& 8.140e-04 $\pm$ 8.405e-05& 1.565e-03 $\pm$ 5.339e-04\\
$(3,200)$& 6.305e-04 $\pm$ 5.490e-05& 2.637e-03 $\pm$ 4.855e-04\\\bottomrule
\end{tabular}}
\caption{\em Errors for various $d$, $L$ and $M$ in the linear systems arising from the Riesz fractional diffusion.}
\label{Tab_case2_error}
\end{table}

\subsection{Overflow queuing model}
The next example is the overflow queuing model proposed in \cite{Chan1987,Chan1988}. Suppose there are $d$ queues with individual queue size $N$, one aims to find nontrivial solutions of the following $N^d\times N^d$ linear system,
\begin{equation}\label{18}
(\BA+\BR)\Bu=0.
\end{equation}
Here the first part $\BA$ has the tensor product structure \eqref{17}, in which $\BT^{(n)}$ is given by
\begin{equation*}
\BT^{(n)}:=\left[\begin{array}{cccccccc}
                    \lambda_n & -\mu_n &  &  &  &  &  &  \\
                    -\lambda_n & \lambda_n+\mu_n & -2\mu_n &  &  &  &  &  \\
                     & -\lambda_n & \lambda_n+2\mu_n & -3\mu_n &  &  &  &  \\
                     &  & \ddots & \ddots & \ddots &  &  &  \\
                     &  &  & -\lambda_n & \lambda_n+s_n\mu_n & -s_n\mu_n &  &  \\
                     &  &  &  & \ddots & \ddots & \ddots &  \\
                     &  &  &  &  & -\lambda_n & \lambda_n+s_n\mu_n & -s_n\mu_n \\
                     &  &  &  &  &  & -\lambda_n & s_n\mu_n
                  \end{array}
\right]\in\mathbb{R}^{N\times N},
\end{equation*}
where $s_n\in\mathbb{N}^+$, $\lambda_n,\alpha\in\mathbb{R}^+$ are physical
parameters, and $\mu_n:=s_n^{-1}(\lambda_n+(N-1)^{-\alpha})$. The second part $\BR=\sum_{m\neq n}\BR_{mn}$ with
\begin{equation}
\BR_{mn}=\bigotimes_{k=1}^{d}(\Be_m\Be_m^\top)^{\delta_{mk}}\BR_m^{\delta_{nk}},
\end{equation}
where $\Be_m$ is the $m$-th unit vector in $\mathbb{R}^N$, $\delta_{mn}$ is the Kronecker delta, and
\begin{equation}
\BR_m:=\lambda_m\left[\begin{array}{ccccc}
                        1 &  &  &  &  \\
                        -1 & 1 &  &  &  \\
                         & \ddots & \ddots &  &  \\
                         &  & -1 & 1 &  \\
                         &  &  & -1 & 0
                      \end{array}
\right].
\end{equation}
The (normalized) nontrivial solution $\Bu$ represents the steady-state probability distribution of the queue system. More precisely, $u_{(i_1,i_2,\cdots,i_d)}$ is the probability that $i_k$ customers are in the $k$-th queue for $k=1,\cdots,d$. It has been shown that $\BA+\BR$ has a one-dimensional nullspace.

This problem has an analogue in the continuous case. It is equivalent to the finite difference approximation to an elliptic PDE with a transport term in a rectangular domain, accompanied with the Neumann boundary condition (except for an oblique derivative condition on one particular side). Despite that the true solution $\Bu$ is not given, we can expect that $\Bu$ is analgue to the PDE solution up to the local truncation error of the finite difference scheme, and hence $\Bu$ can be interpolated by smooth Barron functions so that $\|\Bu\|_{\mathcal{B},\Gamma}$ in the error bound \eqref{21} can be small.

In prior work, one can at most solve the linear system with $d=2$. Due to the one-dimensional nullspace of the matrix, here we use the least-squares model with penalty \eqref{16} to solve \eqref{18} for $d=5,10$, in which the first component of $\Bu$ is fixed as 1 and the penalty parameter $\varepsilon$ is set as 1.0. Since the original problem does not involve any physical domains, a fictitious domain $[0,1]^d$ is introduced for the implementation. We set $N=100$, $\alpha=1$, $\lambda_n=0.01$ and $s_n=8n$ for $n=1,\cdots,d$. The algorithm is implemented with $|\mathcal{S}|=2\times10^4$ and the number of iterations being $2\times10^4$. The residuals of the obtained solutions are listed in Table \ref{Tab_case3_res}.

We are also interested in the actual distribution of the numerical solution. Specifically, in the case $d=10$, $L=2$ and $M=200$, we find $x_\text{max}=x_{(2,6,10,15,18,24,27,32,36,40)}$ is the location where the approximate solution $\Bphi(\theta)$ takes its maximum $2.155$. It is intuitive to find that the $n$-th index of $x_\text{max}$ is equal or close to $s_n/2$. To show a more clear distribution of the solution, we present the 2-D slices passing through $x_\text{max}$ in Figure \ref{Fig_case3_solution}.

Note that the true solutions are unknown. To verify the images in Figure \ref{Fig_case3_solution} are believable, we take a low-dimensional test for comparison. We solve the queuing linear systems with $d=2$ or $3$ by Matlab high-accuracy solvers, and the numerical solutions are accurate since the residual is as small as the machine precision. In the low-dimensional cases, we have observed the same property as in the previous high-dimensional cases; namely, the numerical solution attains its maximum at the position $s_n/2$ for the $n$-th index. So we believe that the probability distribution shown in Figure \ref{Fig_case3_solution} is a good simulation, at least in the sense of locating the maxima.

\begin{table}[h!]\small
\centering
\begin{tabular}{l|cc}
  \toprule
  $(L,M)$ & $d=5$ & $d=10$ \\\hline
$(2,100)$& 7.625e-04 $\pm$ 3.078e-05& 1.268e-03 $\pm$ 2.582e-04\\
$(2,200)$& 7.135e-04 $\pm$ 1.729e-05& 1.279e-03 $\pm$ 1.555e-04\\
$(3,100)$& 6.519e-04 $\pm$ 8.867e-06& 1.073e-03 $\pm$ 3.346e-05\\
$(3,200)$& 6.492e-04 $\pm$ 1.422e-05& 1.100e-03 $\pm$ 2.585e-05\\\bottomrule
\end{tabular}
\caption{\em Residuals for various $d$, $L$ and $M$ in the queuing problem.}
\label{Tab_case3_res}
\end{table}

\begin{figure}
\centering
\subfloat[$(x_1,x_2)$-slice]{
\includegraphics[scale=0.5]{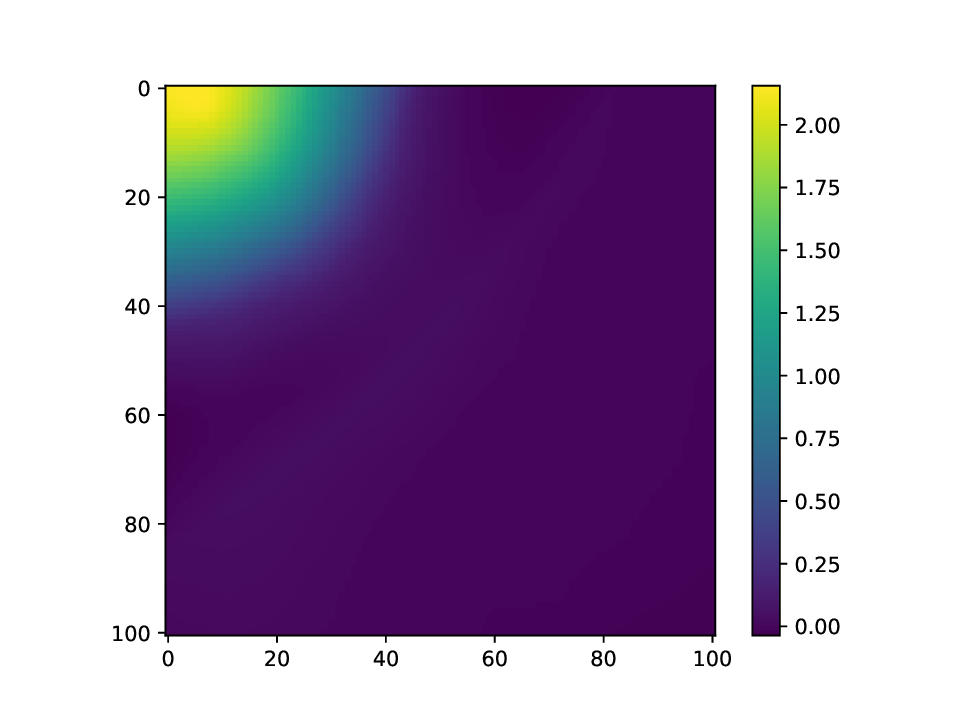}}
\subfloat[$(x_3,x_4)$-slice]{
\includegraphics[scale=0.5]{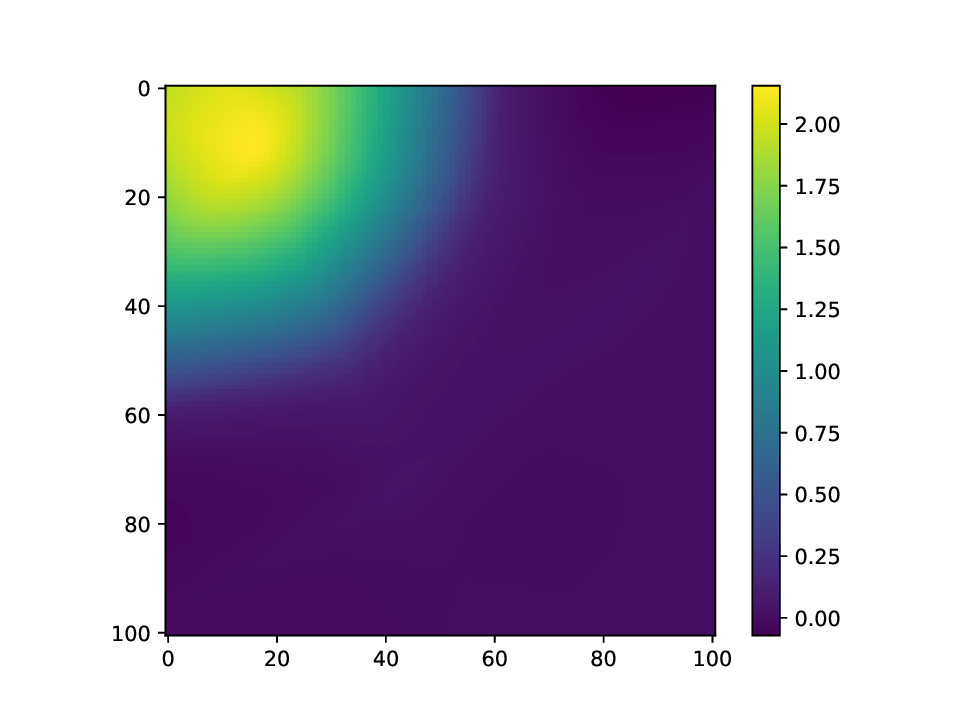}}\\
\subfloat[$(x_5,x_6)$-slice]{
\includegraphics[scale=0.5]{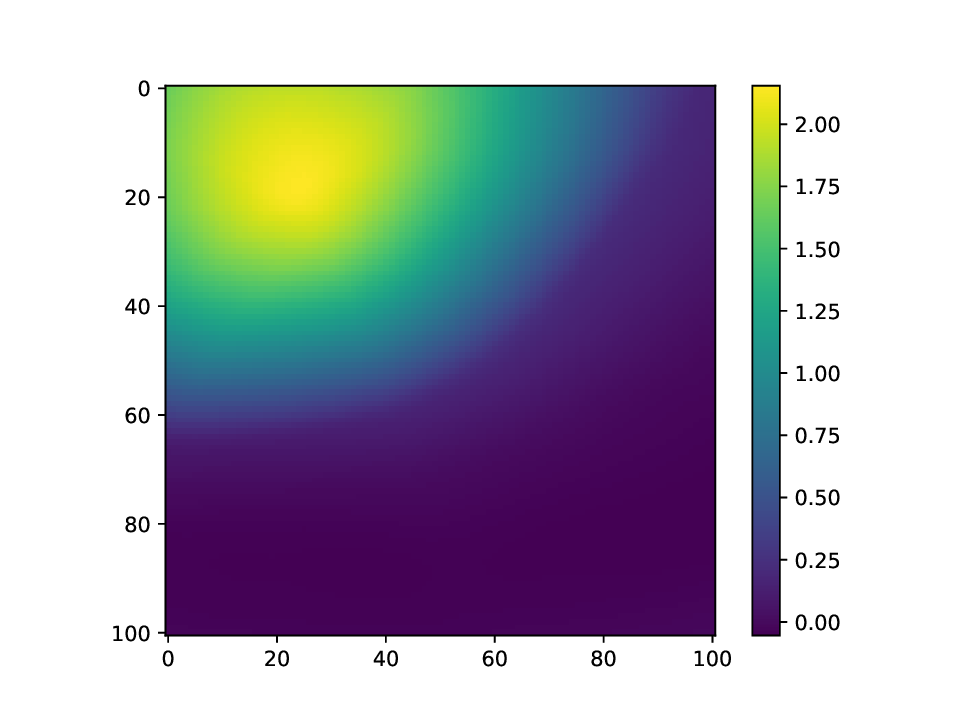}}
\subfloat[$(x_7,x_8)$-slice]{
\includegraphics[scale=0.5]{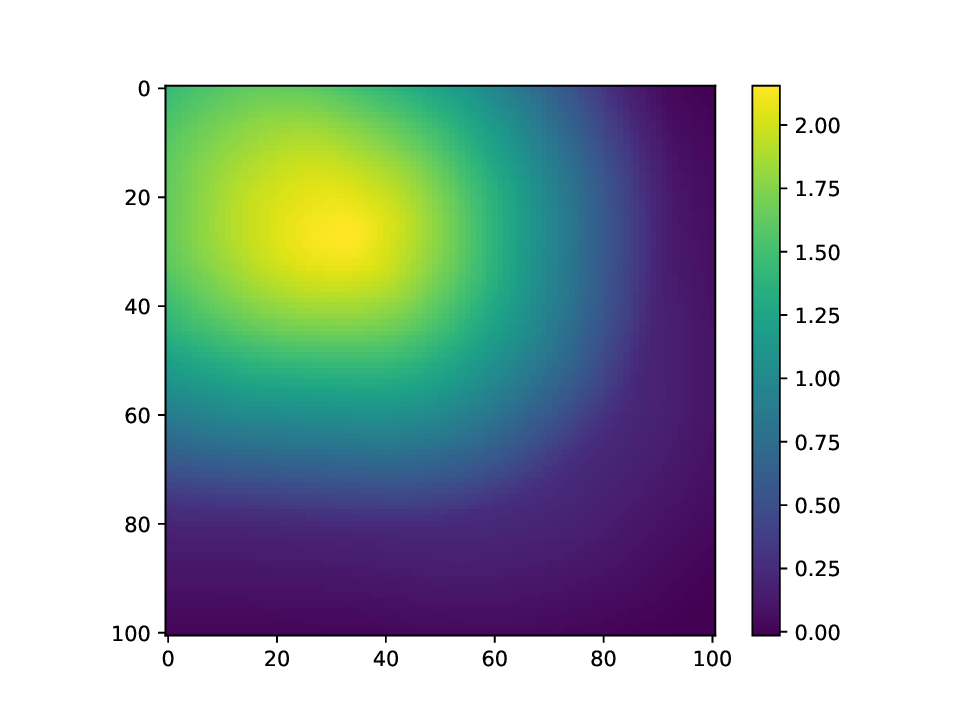}}
\caption{\em 2-D slices of the numerical solution $\phi(x,\theta)$ passing through the maximal point $x_\text{max}$ in the queuing problem ($d=10, L=3, M=200$).}
\label{Fig_case3_solution}
\end{figure}

\subsection{Probabilistic Boolean networks}
Let us consider the steady-state probability distribution of probabilistic Boolean networks, which are widely applied in real-world problems such as genomic signal processing \cite{Li2012}. In this problem, one aims to find the eigenvector associated with the principle eigenvalue 1 of the transition probability matrix, and the normalized eigenvector exactly represents the steady-state probability distribution. The transition probability matrix is of size $2^d$ by $2^d$, where $d$ is the number of genes.

In our experiment, we generate a sparse Toeplitz matrix $\widetilde{\BT}=\left[t_{ij}\right]\in\mathbb{R}^{2^d\times 2^d}$ by
\begin{equation}
t_{ij}=\begin{cases}v_k,\quad \text{if}~j=i+k~\text{for some}~k\in\mathcal{I},\\0,\quad\text{otherwise},\end{cases}
\end{equation}
where $\mathcal{I}$ is a prescribed sparse subset of $\{-2^d,-2^d+1,\cdots,2^d\}$ and $\{v_k\}_{k\in\mathcal{I}}$ is a prescribed set of positive constants. Followed by column normalization on $\widetilde{\BT}$, we obtain the transition probability matrix $\BT$ whose each column adds up to 1. In practice, we casually choose $\mathcal{I}=\{-13,-5,2,6\}$ and $\{v_{13},v_{-5},v_2,v_6\}=\{1,4,3,2\}$.

The proposed method is implemented to find the principle eigenvector of $\BT$. First, we introduce the fictitious domain $[0,1]^d$ and take two grid points $1/3$ and $2/3$ in each dimension. Then we take the following penalized model to compute $(\BI-\BT)\Bu=0$,
\begin{equation}\label{19}
\min_{\theta}~\frac{1}{N^d}\|(\BI-\BT)\Bphi(\theta)\|_2^2+\varepsilon^{-1}\left(N^{-d}\cdot\bm{1}^\top\Bphi(\theta)-1\right)^2,
\end{equation}
where $\bm{1}$ is the all-ones column vector in $\mathbb{R}^{2^d}$; namely, we require the mean of the approximate solution to be 1.

This example is slightly different from previous ones. In each dimension, the solution of the linear system characterizes the Boolean state of one object, so the number of grid points in each dimension is always two. Therefore, the solution $\Bu$ is a grid function defined at a $2\times2\times\cdots\times2$ ($d$ times) grid, which can be interpolated by tensor product linear polynomials. Indeed, recall the tensor product polynomial space of degree $1$ is given by $\mathcal{P}:=\text{span}\{x_1^{p_1}x_2^{p_2}\ldots x_d^{p_d}: p_i=0~\text{or}~1~\text{for}~i=1,\ldots,d\}$. Since the degree of freedom of $\mathcal{P}$ is exactly $2^d$, there exists a unique polynomial $f$ in $\mathcal{P}$ such that $u_\Balpha=f(\Bx_\Balpha)$ for all $\Balpha\in\Lambda$. Moreover, $f$ is analytic and smooth in $\overline{\Omega}$ and hence can be extended to a Barron function on $\mathbb{R}^d$ with a relatively small Barron norm. So we can expect that $\|\Bu\|_{\mathcal{B},\Gamma}$ in the error bound \eqref{21} is small.

The model \eqref{19} is implemented using a mini-batch gradient descent algorithm with $\varepsilon=1.0$, $|\mathcal{S}|=2\times10^4$ and number of iteration $2\times10^4$. In each iteration, the term $N^{-d}\cdot\bm{1}^\top\Bphi(\theta)$ is only calculated at the training points; specifically, we calculate $|\mathcal{S}|^{-1}\sum_{x\in\mathcal{S}}\phi(x;\theta)$ instead. The cases $d=50$ and $100$ are tested, where the matrix $\BI-\BT$ has $O(10^{15})$ and $O(10^{30})$ nonzero entries, respectively. The resulting residuals are listed in Table \ref{Tab_case4_res}. We remark that our experiment computes much larger systems than the previous work \cite{Li2012}, which solves the same problem with at most 30 dimensions and $5\times10^4$ nonzero entries.

\begin{table}[h!]\small
\centering
\begin{tabular}{l|cc}
  \toprule
 $(L,M)$ & $d=50$ & $d=100$ \\\hline
$(2,100)$& 6.146e-04 $\pm$ 1.286e-04& 1.783e-02 $\pm$ 1.279e-03\\
$(2,200)$& 5.809e-04 $\pm$ 8.542e-05& 1.761e-02 $\pm$ 8.862e-04\\
$(3,100)$& 5.239e-04 $\pm$ 8.815e-05& 3.504e-03 $\pm$ 8.978e-04\\
$(3,200)$& 5.031e-04 $\pm$ 8.915e-05& 3.840e-03 $\pm$ 2.047e-04\\\bottomrule
\end{tabular}
\caption{\em Residuals for various $d$, $L$ and $M$ in the probabilistic Boolean network problem .}
\label{Tab_case4_res}
\end{table}

\section{Conclusion}
This work develops a novel NN-based method for extremely large linear systems. The main advantage lies in the saving of storage. Specifically, we create a neural network representation for the unknown vector, containing much fewer free elements than the original linear system. The system is then modified to a nonlinear least-squares optimization, and it can be solved by gradient descent under a deep learning framework. The proposed method allows us to deal with problems out of storage if using traditional linear solvers. An error estimate is also provided using the approximation property of NNs.

Several physical problems are considered in the numerical experiments. This method is successfully implemented to solve the corresponding linear systems. Compared with prior work on these problems, we solve systems of much larger sizes, usually intractable for other existing methods. However, the accuracy of this NN-based method is generally not as high as traditional ones due to the optimization error. Hence it is not recommended for small linear systems.

Moreover, as mentioned in Section \ref{sec:remark_Poisson}, the effectiveness of the method relies on the smoothness of the physical problem. It is required that the solution and the right hand side of the linear system do not oscillate globally or locally. Otherwise, the proposed method will fail. For example, suppose the physical solution is zero except for a few localized spikes in small regions. In that case, the corresponding right hand side of the linear system will also be zero but a small number of components. So with high probability, the equations selected as the training set will have exactly zero right hand sides, and the numerical solution is identically zero. In other words, this method cannot capture the very local property of the solution that is far away from the global tendency.

One direction of future work could be the convergence analysis of the gradient descent in solving the least-squares optimization. Namely, we could investigate whether the gradient descent necessarily finds good minimizers. In recent years, some research work has been conducted on the convergence of gradient descent in neural network regression, yet it is significantly different from this situation. On the one hand, the loss function in this method is the residual of the linear system rather than the simple $\ell^2$ or entropy loss discussed in prior work. On the other hand, most of the previous analysis is based on the over-parametrization hypothesis, in which the NN has much more parameters than terms in the loss function. But in this method, we expect to use NNs with much fewer parameters than equations or unknowns to save the storage.

Numerical results in Section 4.1.2 demonstrate that the accuracy and efficiency of the method are hardly affected by the degree of discretization $N$. In fact, as $N$ increases, although the linear system becomes larger, the mini-batch gradient descent will not carry more burden because the computational amount only depends on the batch size and the number of iterations. Also, we note that our method aims to learn a ``smooth" solution, which is a discretization of the physical solution of the original continuous problem, instead of unstructured discrete data. Hence we do not need a very wide or deep (i.e., over-parametrized) NN as the learner of the target solution. Recent literature \cite{Andoni2014,Zhu2019} implies that in learning problems, if the target function is smooth enough (e.g., a polynomial or an NN-like function), it suffices to use a small learner network whose size does not increase with the number of training samples, and it can be successfully trained by gradient descent. Therefore, the analysis of the gradient descent optimization in our method could be made in similar ways, avoiding the over-parametrization framework.

We are also inspired by the last numerical example, in which the solution represents binary probability distribution and does not characterize any smooth physical quantities. One open question is whether the smoothness hypothesis of the solution is necessary for the success of this method if we regard the linear system as a $2^d\times2^d$ structure. In this case, the approximate network only has to fit two points in every dimension. It is simply required that the network acts as a straight line in any dimension. Therefore it is interesting to investigate whether general $2^d\times2^d$ linear systems can be handled by this method without many hypotheses on the solution.

\bibliography{expbib}
\bibliographystyle{plain}

\appendix

\end{document}